\newtheorem{theorem}{Theorem}[section]
\newtheorem{remark}[theorem]{Remark}
\newtheorem{definition}[theorem]{Definition}
\newtheorem{assumption}[theorem]{Assumption}
\newtheorem{lemma}[theorem]{Lemma}
\newtheorem{corollary}[theorem]{Corollary}
\def \cA{{\mathcal A}}
\def \cC{{\mathcal C}}
\def \cF{{\mathcal F}}
\def \cL{{\mathcal L}}
\def \cO{{\mathcal O}}
\def \cU{{\mathcal U}}
\def \cS{{\mathcal S}}
\def \E{\mathsf{E}}
\def \P{\mathsf{P}}
\def \R{\mathbb{R}}
\def \N{\mathbb{N}}
\def \eps{\varepsilon}
\def \ud{{\mathrm d}}
\def \e{{\mathrm e}}
\DeclareMathOperator*{\supp}{supp}
\newcommand\indd[1]{1_{\{#1\}}}
\DeclareMathOperator*{\graph}{Gr}
\newcommand{\rom}[1]{\uppercase\expandafter{\romannumeral #1\relax}}
\definecolor{brightmaroon}{rgb}{0.7, 0.23, 0.2}
\newcommand\jp[1]{{#1}}
\title[Continuity of optimal surfaces]{On the continuity of optimal stopping surfaces \\ for jump-diffusions}
\author[C.\ Cai, T.\ De Angelis, J.\ Palczewski]{Cheng Cai \and Tiziano De Angelis \and Jan Palczewski}
\keywords{optimal stopping, free boundary problems, continuous optimal boundaries, jump-diffusions}
\thanks{{\em Mathematics Subject Classification 2020}: 60G40, 35R35, 60J60, 60J76}
\address{C.\ Cai: School of Mathematics, University of Leeds, Woodhouse Lane, LS2 9JT Leeds, UK}
\email{\href{mailto: mmcca@leeds.ac.uk}{mmcca@leeds.ac.uk}}
\address{T.\ De Angelis: School of Management and Economics, Dept.\ ESOMAS, University of Torino, Corso Unione Sovietica, 218 Bis, 10134, Torino, Italy; Collegio Carlo Alberto, Piazza Arbarello 8, 10122, Torino, Italy.}
\email{\href{mailto:tiziano.deangelis@unito.it}{tiziano.deangelis@unito.it}}
\address{J.\ Palczewski: School of Mathematics, University of Leeds, Woodhouse Lane, LS2 9JT Leeds, UK}
\email{\href{mailto: j.palczewski@leeds.ac.uk}{j.palczewski@leeds.ac.uk}}
\date{\today}
\numberwithin{equation}{section}
\begin{document}

\begin{abstract}
We show that optimal stopping surfaces $(t,y)\mapsto x_*(t,y)$ arising from time-inhomogeneous optimal stopping problems 
on two-dimensional jump-diffusions $(X,Y)$ are continuous (jointly in time and space) under mild \jp{monotonicity and regularity} assumptions of local nature. 
\end{abstract}

\maketitle

\section{Introduction}
In recent years the literature on optimal stopping for multi-dimensional processes (often discontinuous) has experienced a steady growth. Motivations stem from ever more complex applications in mathematical finance (e.g., \cite{DT02,LT19,CDeAP21}), economics (e.g., \cite{DK20,DeAFF17}) and optimal detection/prediction (e.g., \cite{EPZ20,EW21}), among other fields. Particular interest has been devoted to the Markovian framework, where the theory of free boundary problems provides powerful tools for a detailed characterisation of optimal stopping rules. 
This elegant connection between probability and analysis is enabled by variational principles that reduce the stopping problem to the question of solving suitable partial differential equations (PDEs). It turns out that the optimal stopping boundary in the original stopping problem coincides with the free boundary of the domain for the PDE. So, both probabilistic and analytical methods have been widely employed for the study of such boundaries.

Establishing the continuity of the optimal stopping boundary has been historically one of the key challenges in the field. On an abstract mathematical level this is of interest because the regularity of the stopping boundary has direct bearing on the regularity of the value function of the optimal stopping problem and, therefore, on the possibility to establish existence and uniqueness of solutions for the associated free boundary problem. On a more practical level, the continuity of the optimal boundary is generally required for efficient numerical computation of the boundary itself. Indeed, an optimal stopping boundary is very often computed as the unique solution of a non-linear integral equation of Fredholm or Volterra type. In that context, continuity is needed to establish {\em uniqueness} of the solution to the integral equation, using methods originally developed in \cite{P05am} and subsequently refined/adapted in various other papers (see \cite{PS} for numerous examples and further references). The use of such integral equations is nowadays a standard machinery, which originated with ideas contained in \cite{K56, MK65, VM76} and later flourished in the context of American option pricing \cite{J91,K90,M92}. \jp{Recently, it was shown that integral equations can also be used in multi-dimensional diffusive optimal stopping problems \cite{CDeAP21,CCMS}, thus motivating our study of the optimal boundary's continuity in higher dimensions}.

Our contribution to the field is twofold and our results are obtained under very mild conditions that only pertain local properties of the problem data. First, we establish continuity of optimal stopping boundaries that depend on time {\em and} on another state-variable. Existing results cover (almost exclusively) the case of boundaries depending on a single variable (either `time' or `space' as in \cite{DeA15,LM} or \cite{P19}) -- clearly, the continuity of a surface is a much subtler issue than the continuity of a curve. Second, our results hold for a broad class of optimal stopping problems, including the case of jump-diffusion models. Previous work focussed mainly on continuous processes with only few exceptions in the context of American option pricing; these include \cite{LM} which considers a L\'evy process, and \cite{BX09,YJB06} which consider jump-diffusion  processes. Differently from our set-up, in those papers the optimal boundary is a function of time only.

From a methodological point of view our approach is conceptually easy \jp{and it relies upon the use of the adjoint operator of the underlying diffusion's infinitesimal generator}. Its roots are in ideas used in \cite{LM, DeA15} but only for time-dependent boundaries.
Our extension to optimal stopping surfaces requires non-trivial additional work and we are also able to relax smoothness assumptions on the coefficients of the underlying dynamics. Our work also \jp{generalises \cite[Thm.\ 10 and Cor.\ 20]{P19}} which consider boundaries that are functions of one variable only (either `time' or `space'). \jp{As in \cite{P19} we use smooth-fit to rule out discontinuities of the boundary but our 3-dimensional set-up induces important technical differences that we discuss in more detail in Remarks \ref{rem:goran1}--\ref{rem:goran3}.}

Regularity of free boundaries has been studied, of course, in the PDE literature. The focus there is generally on applications of free boundary problems to physical phenomena as, e.g., the celebrated Stefan's ice-melting problem (see \cite{Cannon} and \cite{Fri} for more details and references; see also \cite{Fri75} for a classical reference in the case of parabolic problems with one spatial dimension). Some of the ideas and methods from those areas of application also spilled over to the mathematical finance community in the context of American put/call option pricing. When the boundary is a function of time only and the underlying stochastic process is a Brownian motion (or a geometric Brownian motion), it can be shown that the optimal boundary is continuously differentiable (or better) using fine estimates on the Gaussian (or log-normal) transition density (see, e.g., \cite{CC07}). Similar ideas are also used in \cite{BX09} in the presence of jumps. That approach however does not extend to higher dimensions. 

In a multi-dimensional framework, with either elliptic or parabolic operators, continuous differentiability or higher regularity of the free boundary can be obtained via sophisticated PDE methods that often require a special structure of the second order operator (normally a Laplacian); see, e.g., \cite{LS09}. The use of those methods in the optimal stopping literature has been quite limited, partly due to the higher complexity of the underlying stochastic models. In the context of multi-dimensional optimal stopping problems, Lipschitz continuity of the boundary was shown in \cite{DeAS19} under some restrictive assumptions on the problem data and only for continuous processes.

As pointed out in the introduction of \cite{P19}, both in the PDE and probabilistic literature, continuity or higher regularity of the free boundary is mainly studied in specific examples using ad-hoc methods. In this paper, instead, we continue the development of a general theory for broader applications by addressing the problem in a general framework. This is in the spirit of earlier contributions \cite{DeA15,LM,P19} which we \jp{continue} by increasing the dimension of the state space while at the same time including jumps in the underlying dynamics. 

The paper is organised as follows. We set-up the problem in Section \ref{sec:setting}. For the clarity of exposition, we prove continuity of the optimal boundary first in the case of continuous processes (Section \ref{sec:con-1}) and then in fuller generality (Section \ref{sec:con-jump}). In all cases, we provide a detailed discussion around sufficient conditions that guarantee our assumptions hold in practical applications. We conclude in Section \ref{sec:last} by illustrating extensions to more general dynamics and gain functions featuring additive processes as, for example, local times.

\section{Setting}\label{sec:setting}
On a complete filtered probability space $(\Omega,\cF, (\cF_t),\P)$ we consider a 
two-dimensional time-inhomogeneous strong Markov process $(X,Y)$ with state space $\cO \subset \R^2$, where $\cO$ is open and possibly unbounded. The infinitesimal generator is
defined via its action on smooth functions $f:[0,T]\times \cO \to \R$ as follows:
\begin{align}
\label{eq:generator}
(\cL f)(t,x,y)=(\cL^\circ f)(t,x,y)+(\cA f)(t,x,y).
\end{align}
Here we have set
\begin{align}\label{eq:cL}
(\cL^\circ f)(t,x,y):=&\big(\beta_1\partial_{xx}f +\beta_2\partial_{yy}f + 2\bar \beta\partial_{xy}f+\alpha_1\partial_x f+\alpha_2\partial_y f\big)(t,x,y),
\end{align}
for some Borel-measurable functions \jp{$\alpha_i: [0, T] \times \cO \to \R$, $\beta_i: [0, T] \times \cO \to [0, \infty)$,} with 
\[
\bar \beta(t,x,y)=\rho\sqrt{\beta_1(t,x,y)\beta_2(t,x,y)}
\]
and $\rho \in [-1, 1]$. Letting $E$ be the space of Borel measurable functions $[0,T] \times \cO \to \R$, the operator $\cA:D_\cA\to E$ is a linear map defined on a linear subspace  $D_\cA\subset E$. We avoid making specific (global) regularity assumptions on the coefficients $\alpha_i, \beta_i$, but we will later require some local properties thereof. For the sake of mathematical generality we also do not specify the form of $\cA$ any further but it will be clear in a moment that our main application is for jump-diffusion processes. We use the notation $\P_{t,x,y}(\,\cdot\,):=\P(\,\cdot\,|X_t=x,Y_t=y)$ and $\E_{t,x,y}[\,\cdot\,]$, for the expectation under $\P_{t,x,y}$.

Letting $T\in(0,\infty]$ be the time horizon, for $(t,x,y)\in [0,T]\times\cO$ we are interested in optimal stopping problems of the form
\begin{equation}
\label{eq:osp-1}
\begin{aligned}
v(t,x,y)=\sup_{t\le \tau\le T}\E_{t,x,y}\left[\e^{-\int_t^\tau r(s,X_{s},Y_{s})\ud s}g(\tau,X_{\tau}, Y_{\tau})\right],
\end{aligned}
\end{equation}
where the supremum is taken over all $(\cF_t)$-stopping times. The discount rate $r:[0,T]\times\cO\to[0, \infty)$ and the gain function $g:[0,T]\times\cO\to\R$ are Borel-measurable functions and we will impose further assumptions later on as necessary. 

We assume that the problem is {\em well-posed} in the sense that the value function is finite for all $(t,x,y)\in[0,T]\times\cO$, the stopping time 
\[
\tau_*:=\inf\{s\in[t,T] : v(s,X_{s},Y_{s})=g(s,X_{s},Y_{s})\}, 
\]
is optimal for all $(t,x,y)\in[0,T]\times\cO$ and the process $(Z_s)_{s\in[t,T]}$ defined as
\[
Z_s:=\e^{-\int_t^{s}r(u,X_u,Y_u)\ud u }v(s,X_s,Y_s)
\]
is a super-martingale, whereas the stopped process $(Z_{s\wedge\tau_*})_{s\in[t,T]}$ is a martingale for all $(t,x,y)\in[0,T]\times\cO$.
Existence of an optimal stopping time and the (super-)martingale property of the value process are discussed extensively in, e.g., \cite{EK,S,PS} and \cite[Appendix D]{KS2}, while the finiteness of the value is generally easy to prove in specific examples (see, e.g., \cite{PS}) and it is immediate if, for example, $g$ is bounded.

As usual we denote the continuation set by
\[
\cC:=\big\{(t,x,y)\in[0,T]\times\cO: v(t,x,y)>g(t,x,y)\big\}
\]
and the stopping set by $\cS=\big([0,T]\times\cO\big)\setminus\cC$. We assume that $\cC\neq\varnothing$ and $\cS\neq\varnothing$.

\begin{remark}[Gain function]
It will be clear from the analysis below that adding a running cost/profit of the form
\[
\int_t^{\tau}\e^{-\int_t^s r(u,X_u,Y_u)\ud u}f(s,X_s,Y_s)\ud s
\]
in the optimisation criterion leads to no additional difficulty. Moreover, by an application of Dynkin's formula, the running term can usually be absorbed in the form \eqref{eq:osp-1} by replacing $g$ with $g+\tilde g$ for a function $\tilde g$ chosen so that $f=(\partial_t+\cL-r)\tilde g$.
\end{remark}

\subsection{Some comments on the operator $\cA$}
The family of processes described by the generator $\cL$ of the form \eqref{eq:generator} is quite general. The operator $\cL^\circ$ corresponds to the diffusive part of the dynamics of $(X,Y)$ whereas the operator $\cA$ is not required to have any specific form for the validity of the main results of this paper. However, our main motivating example is that of $\cA$ taking the form of a non-local integral operator, accounting for possible jumps in the dynamics of the process $(X,Y)$. In that case we should consider $\cA$ of the form
\begin{multline}\label{eq:cA}
(\cA f)(t,x,y):=\sum_{i=1}^L\int_{\R^d}\Big(f(t,(x,y) +\gamma_i(t, x, y, \xi))-f(t,x,y)\\
- \indd{\bar\gamma_i (\xi) < 1} \nabla f(t,x,y) \cdot \gamma_i(t, x, y, \xi)\Big)\nu_i(\ud \xi),
\end{multline}
where $L\in\mathbb N$ is fixed, functions $\gamma_i : [0, T] \times \cO \times \R^d \to \R^2$, $i = 1, \ldots, L$, are Borel measureable and $\nabla f \cdot \gamma_i$ stands for the $\R^2$-scalar product between $\nabla f := (\partial_x f, \partial_y f)$ and $\gamma_i$. Radon measures $\nu_i$, $i=1, \ldots, L$, are defined on $\R^d$ and satisfy
\begin{equation}\label{eqn:Levy_condition}
\int_{\R^d} (\bar\gamma_i^2(\xi) \wedge 1) \nu_i (\ud \xi) < \infty, 
\end{equation}
where $\bar \gamma_i$ is a Borel measurable function such that
\[
\sup_{(x,y) \in \cO} \sup_{t \ge 0} \|\gamma_i(t, x, y, \xi)\| \le \bar \gamma_i(\xi), \qquad \xi \in \R^d,
\]
with $\|\,\cdot\,\|$ denoting the Euclidean norm in $\R^2$.
The reader is referred to \cite[Chapter 2]{Garroni} for more general forms of integro-differential operators corresponding to jump processes with time-space dependent intensity and their properties.

Under specific assumptions (see \cite[Ch.~6]{Sato} and \cite[Sec.~1.3]{Oksendal}) generators of the form \eqref{eq:generator}, \eqref{eq:cL},\eqref{eq:cA} can be linked to solutions of L\'{e}vy SDEs. Here we briefly review some well-known facts about those SDEs. 

We write  $N_i$, $i = 1, \ldots, L$, for independent point processes on $\R^d$ with the L\'{e}vy measures $\nu_i$, defined on the filtered probability space $(\Omega,\cF, (\cF_t),\P)$. We define the compensated point process
\[
\tilde N_i(\ud t, \ud\xi) = N_i(\ud t,\ud\xi) - \indd{\bar\gamma_i(\xi) < 1} \nu_i(\ud\xi) \ud t.
\]
We assume that the probability space also supports two correlated Brownian motions $W,B$ with the correlation coefficient $\rho$ and independent of the point processes. Then, the generator $\cL$ with $\cA$ as in \eqref{eq:cA} can be linked to a solution of the following system of L\'{e}vy SDEs: for $s\ge t$
\begin{align*}
X_{s}&= x +\!\int_t^s\! \alpha_1(u,X_{u},Y_{u})\ud u\\
&\quad+\!\int_t^s\!\sqrt{2\beta_1(u,X_{u},Y_{u})}\ud B_{u}\!+ \!\sum_{i=1}^L \int_t^s\!\!\int_{\R^d}\! \gamma_i^{(1)}(u, X_{u-}, Y_{u-}, \xi) \tilde N_i(\ud u, \ud \xi),\notag\\
Y_{s}&= y+\!\int_t^s\! \alpha_2(u,X_{u},Y_{u})\ud u\\
&\quad+\!\int_t^s\! \sqrt{2\beta_2(u,X_{u},Y_{u})}\ud W_{u}\!+\! \sum_{i=1}^L \int_t^s\!\! \int_{\R^d}\! \gamma_i^{(2)}(u, X_{u-}, Y_{u-}, \xi) \tilde N_i(\ud u, \ud \xi),\notag 
\end{align*}
where $(\gamma_i^{(1)}, \gamma_i^{(2)})$ are the coordinates of $\gamma_i\in\R^2$. Detailed conditions for the existence and uniqueness of weak/strong solutions to the equations above can be found, for example, in \cite{Applebaum, Oksendal}. We will not dwell on these important results from stochastic analysis as they fall outside the scope of our work. Below we highlight two popular examples that are encompassed by our framework.  
\vspace{+5pt}

\noindent{(i) \em Compound Poisson process}. 
Our specification of the pure jump component encompasses a compound Poisson processes with time-dependent jump sizes. Then, for $s\ge t$, the process $(X, Y)$ solves
\begin{align*}
& \ud X_{s}=\alpha_1(s,X_{s},Y_{s})\ud s+\sqrt{2\beta_1(s,X_{s},Y_{s})}\ud B_{s}+ \kappa_1(s) \ud J_s, \quad X_t=x,\\
& \ud Y_{s}=\alpha_2(s,X_{s},Y_{s})\ud s+\sqrt{2\beta_2(s,X_{s},Y_{s})}\ud W_{s}+ \kappa_2(s) \ud K_s, \quad Y_t=y,
\end{align*}
for Borel measurable functions $\kappa_1, \kappa_2: [0, T] \to \R$ satisfying $\int_0^T |\kappa_i (s)| ds < \infty$, $i=1,2$, and a pair of independent compound Poisson process $(J,K)$. We note that the space $C^{0,2}_c((0,T) \times \cO)$ of functions with compact support and twice continuously differentiable with respect to the space variables is included in $D_\cA$.
\vspace{+5pt}

\noindent{(ii) \em Finite L\'evy measure}.
When the L\'{e}vy measure $\nu_i$ is finite, the corresponding point process $N_i$ has finite activity and $i$-th term in the generator $\cA$ from \eqref{eq:cA} can be written without the compensation as (see Example 3.3.7 in \cite{Applebaum})
\[
\int_{\R}\Big(f\big(t,(x,y) +\gamma_i(t, x, y, \xi)\big)-f(t,x,y)\Big)\nu_i(\ud \xi)
\]
with an appropriate adjustment to the drifts $\alpha_1, \alpha_2$. The domain $D_\cA$ contains the space of Borel measurable bounded functions (c.f. \cite[Sec.~2.1]{Garroni}).

\section{Continuity of the stopping boundary: the continuous case}\label{sec:con-1}

In this section we assume $\cA\equiv 0$ so that our process $(X,Y)$ has continuous paths and the associated infinitesimal generator \eqref{eq:generator} reduces to $\cL=\cL^\circ$. The operator $\cL^\circ$ has a local nature that allows us to prove our results under mild assumptions which are also local. For that reason we will often use the notation $\cU$ to indicate a generic bounded open hyperrectangle in $[0,T)\times\cO$ of the form
\begin{align}\label{eq:cU}
\cU=(t,s)\times (x_d, x_u) \times (y_d, y_u).
\end{align}

The main result of the section is the continuity of the optimal stopping boundary, i.e., $\partial\cC$, in any subset $\cU$ of the form above in which certain regularity conditions are verified. Letting $\cU$ be any such subset, we make four standing assumptions. The first one says that $\partial\cC$ can be locally represented as a surface with certain monotonicity properties, the second and third ones clarify the regularity required for the coefficients of the SDE and the gain function, the fourth one concerns regularity of the value function. 

\begin{assumption}
\label{ass:all}
Let $\cU$ be such that $v\in C(\cU)$, $\cU\cap \cC\neq \varnothing$, $\cU\cap\mathrm{int}(\cS)\neq\varnothing$ and the following conditions hold:
\begin{itemize}
\item[(i)]{\em(The boundary)}
There exists a function $(t,y)\mapsto x_{*}(t,y)$, such that
\begin{equation}
\label{eq:b-d}
\cC\cap\cU =\{(t,x,y)\in\cU: x > x_{*}(t,y)\}
\end{equation}
and both $t\mapsto x_{*}(t,y)$ and $y\mapsto x_{*}(t,y)$ are monotonic on their respective domains in $\cU$.\\

\item[(ii)] {\em(Coefficients of the SDE) }
For the coefficients of the SDE and the discount rate it holds 
\[
\alpha_i,\,\beta_i,\,r,\, \partial_x \beta_i \in C(\cU),
\]
for $i=1,2$. Moreover, $\beta_2>0$ on $\cU$.\\

\item[(iii)] {\em(The gain function)} We have $g\in C^{1,2}(\cU)$ and setting $h:=(\partial_t+\cL-r)g$ we have $\partial_x h\in C(\cU)$ and $\tfrac{\partial}{\partial x} (h/\beta_2)\ge \delta$ on $\cU$ for some $\delta>0$.\\

\item[(iv)] {\em(The value function)} We have $v\in C^1(\cU)$ with $\partial_x (v-g) \ge 0$ on $\cU$. Moreover, $v$ satisfies the boundary value problem
\begin{align}\label{eq:PDE}
(\partial_t+\cL-r)f=0\:\:\text{on $\cC\cap\mathcal U$,\quad with $f=v$ on $\partial (\cC\cap\mathcal U)$}
\end{align} 
with all derivatives understood in the {\em classical sense}.
\end{itemize}
\end{assumption}

\begin{definition}\label{def:cont}
We say that $x_{*}(\,\cdot\,,\,\cdot\,)$ is continuous on $\cU$ if the mapping $(t,y)\mapsto x_{*}(t,y)$ is continuous on each rectangle $D = (t_1, t_2) \times (y_1, y_2)$ such that $(t, x_*(t, y), y) \in \cU$ for all $(t,y)\in D$. 
\end{definition}

Notice that if $\beta_1,\beta_2>0$ on $\overline{\cU}$ and $\rho^2<1$ the operator $\cL=\cL^\circ$ is uniformly elliptic on $\overline{\cU}$, so that we can rely upon the next well-known lemma that guarantees \eqref{eq:PDE}.
\begin{lemma}\label{lem:PDE}
Let $\cU\subset[0,T)\times\cO$ be defined as in \eqref{eq:cU}. Assume that
\[
\alpha_1,\,\alpha_2,\,\beta_1,\,\beta_2,\,\bar \beta,\,r
\]
are H\"older continuous in $\cU$, $\beta_1,\beta_2>0$ in $\overline{\cU}$ and $\rho\in(-1,1)$. If $v$ is continuous then $v\in C^{1,2}(\cC\cap\mathcal U)$ and it solves the boundary value problem in \eqref{eq:PDE}. 
\end{lemma}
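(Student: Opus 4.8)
The plan is to prove the statement locally and then patch the pieces together, since interior regularity is a local property: it suffices to show that $v\in C^{1,2}$ in a neighbourhood of each point of the open set $\cC\cap\cU$. Around an arbitrary $(t_0,x_0,y_0)\in\cC\cap\cU$ I would fix a small open parabolic cylinder $Q=(s_1,s_2)\times B$, with $B\subset\R^2$ an open ball, chosen so that $\overline Q\subset\cC\cap\cU$. The key idea, standard in this literature, is to solve an auxiliary boundary value problem on $Q$ by classical PDE theory to obtain a genuinely $C^{1,2}$ function, and then to use the martingale structure of the optimal stopping problem to identify that function with $v$ on $Q$.

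For the PDE step I would consider the Dirichlet problem for the backward parabolic operator $\partial_t+\cL-r$ on $Q$ with boundary datum equal to $v$ on the parabolic boundary $\partial_p Q$ (the lateral boundary together with the terminal slice $\{s_2\}\times B$). The hypotheses $\beta_1,\beta_2>0$ on $\overline\cU$ and $\rho\in(-1,1)$ guarantee that $\cL$ is uniformly elliptic on $\overline Q$, while the H\"older continuity of $\alpha_1,\alpha_2,\beta_1,\beta_2,\bar\beta,r$ supplies H\"older continuous coefficients. Since $v\in C(\cU)$ the boundary datum is continuous, and classical Schauder theory for uniformly parabolic equations (as developed, e.g., by Friedman or by Ladyzhenskaya--Solonnikov--Ural'tseva) yields a unique solution $u\in C^{1,2}(Q)\cap C(\overline Q)$ satisfying $(\partial_t+\cL-r)u=0$ in $Q$ and $u=v$ on $\partial_p Q$.

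It remains to show $u=v$ on $Q$, which simultaneously transfers the regularity of $u$ to $v$. Let $\tau_Q$ denote the first exit time of $(s,X_s,Y_s)$ from $Q$ when the process is started at $(t,x,y)\in Q$; as $\overline Q$ is bounded with $s_2<T$, the time $\tau_Q$ is a bounded stopping time. Because $u$ is $C^{1,2}$ on $Q$ and continuous up to $\overline Q$, and the coefficients are bounded on the compact set $\overline Q$, an application of Dynkin's formula up to $\tau_Q$ (justified by exhausting $Q$ with compactly contained subcylinders $Q_n\uparrow Q$ and passing to the limit via dominated convergence) together with $(\partial_t+\cL-r)u=0$ gives
\[
u(t,x,y)=\E_{t,x,y}\Big[\e^{-\int_t^{\tau_Q} r(s,X_s,Y_s)\ud s}\,v(\tau_Q,X_{\tau_Q},Y_{\tau_Q})\Big],
\]
where I have used $u=v$ on $\partial_p Q$. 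On the other hand, $\overline Q\subset\cC$ forces $\tau_*\ge\tau_Q$, since while inside $Q$ one has $v>g$ so the stopping set has not been reached; optional sampling of the martingale $(Z_{s\wedge\tau_*})$ at $\tau_Q$ then yields the identical right-hand side for $v(t,x,y)$. Hence $u=v$ on $Q$, so $v\in C^{1,2}(Q)$; letting $(t_0,x_0,y_0)$ range over $\cC\cap\cU$ gives $v\in C^{1,2}(\cC\cap\cU)$ solving \eqref{eq:PDE}, with the boundary condition $f=v$ on $\partial(\cC\cap\cU)$ immediate from $v\in C(\cU)$.

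The main obstacle is the careful set-up of the localisation: verifying $\tau_*\ge\tau_Q$ and that the stochastic integral arising in Dynkin's formula is a true martingale up to $\tau_Q$. Both rely on $\overline Q$ being a compact subset of $\cC\cap\cU$ on which the coefficients and $v$ are bounded, so that $\tau_Q$ is integrable and the stopped local martingale is uniformly integrable. The PDE solvability itself is classical once uniform ellipticity and H\"older regularity are in place, so the genuinely problem-specific content is the probabilistic identification $u=v$, which crucially uses the assumed martingale property of the stopped value process $(Z_{s\wedge\tau_*})$.
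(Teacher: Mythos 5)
Your proposal is correct and takes essentially the same route as the paper's proof: solve the Dirichlet problem on a parabolic cylinder inside $\cC\cap\cU$ via classical parabolic theory (Friedman), identify the solution with $v$ by It\^o/Dynkin's formula applied on an exhausting sequence of subcylinders with a dominated-convergence passage to the limit, and conclude using the martingale property of the stopped value process, which applies because the cylinder lies in the continuation set. The only differences are cosmetic (your explicit insistence on $\overline{Q}\subset\cC\cap\cU$ and on optional sampling, which the paper leaves implicit).
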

\begin{proof}
For any cylinder $D:= (t_0, t_1) \times B \subset \cC \cap \cU$, where $B$ is an open ball, consider a terminal boundary value problem
\begin{equation}\label{eqn:cyl}
(\partial_t+\cL^\circ-r)f =0\text{ on $D$, \quad with $f=v$ on $\partial D$,}
\end{equation}
where $\partial D := ([t_0, t_1] \times \partial B) \cup (\{t_1\} \times B)$ denotes the parabolic boundary. This problem admits a unique classical solution (see \cite[Ch.~3, Cor.~2, p.~71]{Fri}). 

Now, let $(D_n)_{n\in\N}$ be an increasing sequence of cylinders contained in $D$ and such that $D_n\uparrow D$ as $n\to\infty$. Let $\tau_{D}$ be the first exit time of $(t,X,Y)$ from $D$ and $\tau_{D_n}$ the first exit time from $D_n$. An application of It\^o's formula gives
\begin{align*}
f(t,x,y)&=\E_{t,x,y}\left[\e^{-\int_t^{\tau_{D_n}}r(s,X_s,Y_s)\ud s}f(\tau_{D_n},X_{\tau_{D_n}},Y_{\tau_{D_n}})\right].
\end{align*}
Let $n\to\infty$. Using the uniform ellipticity of $\cL^\circ$ on $\overline{D}$, we obtain that $\tau_{D_n}\uparrow \tau_D$ almost surely as $n\to\infty$. Since $D$ is bounded and $f$ is continuous, by the dominated convergence theorem we obtain the first equality below
\begin{align*}
f(t,x,y)&=\E_{t,x,y}\left[\e^{-\int_t^{\tau_{D}}r(s,X_s,Y_s)\ud s}f(\tau_{D},X_{\tau_{D}},Y_{\tau_{D}})\right]\\
&=\E_{t,x,y}\left[\e^{-\int_t^{\tau_D}r(s,X_s,Y_s)\ud s}v(\tau_D,X_{\tau_D},Y_{\tau_D})\right]=v(t,x,y),
\end{align*}
for all $(t,x,y)\in D$; the second equality follows from $(\tau_{D},X_{\tau_{D}},Y_{\tau_{D}}) \in \partial D$ and the final equality is by the martingale property of the value function. Hence, $v$ is a unique classical solution of \eqref{eqn:cyl}. As $D$ is arbitrary in $\cC \cap \cU$, we conclude that $v$ solves \eqref{eq:PDE}.
\end{proof}

Under Assumption \ref{ass:all} the function $u=v-g$ solves 
\begin{align}
\label{eq:pde-0}
&\big(\partial_t +\cL -r \big)u=-h, \quad\text{on $\cC\cap\cU$,}
\end{align}
with boundary conditions
\begin{align}\label{eq:smoothfit}
&u=\partial_t u=\partial_x u=\partial_y u=0, \quad\text{on $\partial\cC\cap\cU$}. 
\end{align}

Some comments on the assumptions above are in order. Continuity of the value function (at least locally) is generally not difficult to prove and there are numerous papers addressing this question in a broad generality (see, e.g., \cite{PS} and references therein for a rich class of examples; see also \cite{L09} or \cite{PS11} for problems with irregular data and \cite{JLL} for connections to variational problems). If $v\in C(\cU)$, then the well-posedness (in the sense above) of the optimal stopping problem usually leads to higher smoothness of the value function (e.g., as in Lemma \ref{lem:PDE}). 

The existence of an optimal boundary is normally proved on a case by case basis and it is known that there are several possible sufficient conditions that guarantee it (see, e.g., \cite{JL92} for an early contribution in this direction and, again, \cite{PS} for various techniques developed in specific examples). Therefore, rather than providing an inevitably incomplete list of such sufficient conditions we directly assume that the boundary exists. Also assuming that the continuation set lies above the boundary is with no loss of generality and the results of this paper carry over to the case in which $\cC$ lies below the boundary, up to obvious changes to the arguments of proof. Requiring \emph{local} monotonicity of the boundary is necessary to avoid pathological examples of boundaries with infinite local variation. In practice, the monotonicity is also checked on a case by case basis and sufficient conditions are known\footnote{For example, if $T<\infty$ and if $g$, $r$ and the coefficients of the SDE are independent of time, one immediately obtains that $t\mapsto (v-g)(t,x,y)$ is non-increasing. So, if \eqref{eq:b-d} holds the boundary is increasing in time.}. 

Local regularity of the coefficients of the SDE, the discount rate and the gain function are non-restrictive and hold in virtually all examples addressed in the optimal stopping literature. 
The condition $\partial_x(h/\beta_2)\ge \delta$ is of a slightly technical nature but it is in line with the fact that $\cC$ lies \jp{(locally)} above the optimal boundary. Indeed, notice that if $\partial_x\beta_2=0$, the condition is equivalent to $\partial_x h>0$. In many cases the latter is sufficient to prove\footnote{For example, take $g\in C^{1,2}([0,T]\times\cO)$ and $r(t,x,y)\equiv r>0$ then by an application of Dynkin's formula
\[
(v-g)(t,x,y)=\sup_{t\le \tau\le T}\E_{t,x,y}\Big[\int_t^\tau\e^{-rs}h(s,X_s,Y_s)\ud s \Big].
\]
Assume $\partial_x\alpha_2=\partial_x \beta_2=0$ and that $(X,Y)$ is a strong solution. Denote by $(X^{t,x,y},Y^{t,x,y})$ the process with the initial condition $(X_t,Y_t)=(x,y)$. For $x'>x$ we have, almost surely, $X^{t,x,y}_s\le X^{t,x',y}_s$ and $Y^{t,x,y}_s=Y^{t,x',y}_s$ for all $s\ge t$ by pathwise comparison. If $\partial_x h>0$ then $(v-g)(t,x',y) \ge (v-g)(t,x,y)$, which implies that the mapping $x\mapsto (v-g)(t,x,y)$ is increasing.} $\partial_x v\ge \partial_x g$ which then implies the existence of the boundary as in \eqref{eq:b-d}. 

Sufficient conditions that guarantee $v\in C^1(\cU)$ are provided in \cite{DeAPe20} and numerous extensions have been developed in specific examples (see, e.g., \cite{CDeAP21} and \cite{JP} for multi-dimensional optimal stopping problems). It is not hard to check that the requirement $\partial_x(v-g)\ge 0$ is equivalent to \eqref{eq:b-d}, since $\cU$ can be chosen arbitrarily small around a point of the boundary $\partial\cC$. Despite this slight redundancy we prefer to add the condition as part of our assumptions for clarity of exposition. 

Finally, \eqref{eq:PDE} holds under very mild conditions that are satisfied in all examples we are aware of. There are many sufficient conditions on the coefficients of the SDE that guarantee \eqref{eq:PDE} (as, for example, in Lemma \ref{lem:PDE} above) but we decided to state the assumptions in a broader generality to also cover some degenerate cases including, e.g., $\beta_1\equiv 0$ (as in the American Asian option \cite[Sec.27]{PS}) or even $\alpha_1=\beta_1\equiv 0$, where $x$ only enters as a parameter in connection to singular stochastic control problems (see, e.g., \cite{DeAFF17}). 

Let us now state the main result of this section.
\begin{theorem}
\label{them:conti-1}
Let $\cA\equiv 0$. Under Assumption \ref{ass:all}, the optimal stopping boundary $(t,y)\mapsto x_{*}(t,y)$ is continuous on $\cU$ in the sense of Definition \ref{def:cont}.
\end{theorem}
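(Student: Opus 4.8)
The plan is to argue by contradiction: assuming $x_*$ is discontinuous on some rectangle $D$ as in Definition \ref{def:cont}, I would produce a nondegenerate flat piece of the boundary and contradict the non-degeneracy condition $\partial_x(h/\beta_2)\ge\delta$ of Assumption \ref{ass:all}(iii). First I would reduce joint continuity to the exclusion of one-dimensional jumps: since $t\mapsto x_*(t,y)$ and $y\mapsto x_*(t,y)$ are monotone they admit one-sided limits, and a function that is separately monotone and separately continuous is jointly continuous; hence it suffices to rule out a jump of, say, $y\mapsto x_*(t_0,y)$ at a point $y_0$ (a jump in $t$ being treated analogously). Such a jump, together with $\cC\cap\cU=\{x>x_*(t,y)\}$, yields a horizontal segment $L=\{t_0\}\times(a,b)\times\{y_0\}$ contained in $\partial\cC\cap\cU$, with $a=x_*(t_0,y_0-)<x_*(t_0,y_0+)=b$, such that $\cC$ lies on the side $\{y<y_0\}$ and $\mathrm{int}(\cS)$ on the side $\{y>y_0\}$.

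On $L$ I would first collect the degeneracies forced by $u:=v-g$. Since $u\ge0$, $u\equiv0$ on $L$ and $u\in C^1(\cU)$, every point of $L$ is a minimum, so $u=\partial_t u=\partial_x u=\partial_y u=0$ there; moreover, differentiating the relations $u\equiv0$ and $\partial_y u\equiv0$ along $L$ in the $x$-direction gives $\partial_{xx}u=\partial_{xy}u=0$ on $L$ as well. Passing to the limit in \eqref{eq:pde-0} from the $\cC$-side, all terms except the one carrying $\partial_{yy}u$ drop out and I obtain the identity $\beta_2\partial_{yy}u=-h$, i.e. $\partial_{yy}u=-h/\beta_2$, on $L$. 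The crux is then to exploit this through $w:=\partial_x u$. Using $\partial_x\beta_i,\partial_x h\in C(\cU)$ and interior regularity of $u$ on $\cC$, I would differentiate \eqref{eq:pde-0} in $x$; on $L$, where all the lower-order and $x$-involving derivatives above vanish and $\partial_{yy}u=-h/\beta_2$, the commutator collapses to $(\partial_x\beta_2)\partial_{yy}u$ and I get
\[
(\partial_t+\cL-r)w=-\beta_2\,\partial_x(h/\beta_2)\le-\beta_2\delta<0\qquad\text{on }L,
\]
so $w$ is a \emph{strict} supersolution near $L$ that is non-negative (by $\partial_x(v-g)\ge0$) and vanishes on $L$.

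Finally I would reach the contradiction via a Hopf-type boundary point lemma. Since $\beta_2>0$, the operator is non-degenerate in the $y$-direction, which is precisely the inward-normal direction to $\cC$ along $L$; as $w\ge0$ is a strict supersolution with $w=0$ on $L$ and $w>0$ in $\cC$ (strong maximum principle), the boundary point lemma forces the one-sided $y$-derivative of $w$ at $L$ to be nonzero, i.e. $\partial_{xy}u\neq0$ on $L$, contradicting $\partial_{xy}u=0$ established above. I expect the main obstacles to be technical rather than conceptual: (i) justifying the passage to the limit of the second-order derivatives and the $x$-differentiation of the equation \emph{up to} the flat boundary under the merely local, low-regularity assumptions, which is where boundary/Schauder estimates on $\cC$ enter; and (ii) the case of a jump in the time variable, where the relevant face of $L$ lies in $\{t=t_0\}$ and is \emph{characteristic} for the parabolic operator, so the Hopf lemma does not apply directly and a tailored maximum-principle argument (or a reduction to the spatial direction) is required, before assembling the separate continuity statements with monotonicity into joint continuity on $D$.
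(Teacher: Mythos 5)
Your reduction (monotone one--dimensional sections plus separate continuity imply joint continuity, then rule out one--dimensional jumps by contradiction via a flat piece of $\partial\cC$) is exactly the paper's starting point, but the analytic core of your argument would fail under Assumption \ref{ass:all}. Your plan needs (a) the second derivatives $\partial_{xx}u,\partial_{xy}u,\partial_{yy}u$ to extend continuously from $\cC$ up to the flat segment $L$ --- both to identify the tangential derivatives of $u$ and $\partial_y u$ along $L$ with the full partials and to ``pass to the limit in \eqref{eq:pde-0} from the $\cC$-side'' --- and (b) the equation to be differentiable in $x$, so that $w=\partial_x u$ solves a PDE in an open region adjacent to $L$ (a supersolution inequality only \emph{on} $L$ is useless for a Hopf argument). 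Neither is available: Assumption \ref{ass:all}(iv) gives $v\in C^1(\cU)$ and classical derivatives only \emph{inside} $\cC\cap\cU$, and Assumption \ref{ass:all}(ii) gives continuity of $\alpha_i$, $r$, $\beta_i$, $\partial_x\beta_i$ but no $x$-differentiability of $\alpha_i$, $r$ or $\bar\beta$, so your commutator contains terms ($\partial_x\alpha_1\,\partial_x u$, $\partial_x\alpha_2\,\partial_y u$, $\partial_x r\, u$) that are not even defined off $L$. This is not a removable technicality: the paper's concluding remarks state explicitly that continuity of second derivatives up to the boundary is not known to hold for such problems, and avoiding it is the whole design of the published proof, which tests \eqref{eq:pde-0} against $\partial_x\varphi$ (resp.\ $\varphi\psi$), integrates by parts so that \emph{all} second derivatives land on the test functions, and then passes to the limit $y\uparrow y_0$ (resp.\ $t\uparrow t_0$) using only $C^1$ regularity, \eqref{eq:smoothfit} and dominated convergence.

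Two further gaps. First, your Hopf boundary-point lemma and strong maximum principle (``$w>0$ in $\cC$'') require non-degenerate ellipticity, whereas Assumption \ref{ass:all} only asks $\beta_2>0$ and deliberately allows $\beta_1\equiv 0$ and $\rho^2=1$ (the paper cites the Asian option and singular-control examples as motivation for this generality); non-degeneracy in the single inward-normal direction does not rescue the strong maximum principle for such operators. Second, you leave the time-direction jump unresolved, correctly observing that the relevant face is characteristic; but this case is not ``analogous'' in your framework and needs a genuinely different idea. The paper's treatment is instructive: it proves $y$-continuity \emph{first}, uses it to place an entire slice $\{t_0\}\times(x_1,x_2)\times(\tilde y,y_0)$ inside $\cS$ with the open box $(\tilde t,t_0)\times(x_1,x_2)\times(\tilde y,y_0)$ inside $\cC$, multiplies \eqref{eq:pde-0} by $\varphi(x)\psi(y)$, integrates by parts and lets $t\uparrow t_0$: every term involving $u$ or its first derivatives vanishes by \eqref{eq:smoothfit}, leaving $h(t_0,\cdot,\cdot)\equiv 0$ on the rectangle, which contradicts $\partial_x(h/\beta_2)\ge\delta$. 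No boundary-point lemma, no second-derivative limits, and no differentiation of the equation are needed anywhere.
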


\begin{proof}
Since the maps $t\mapsto x_{*}(t,y)$ and $y\mapsto x_{*}(t,y)$ are monotonic it is sufficient to show that they are also continuous. Then the map $(t,y)\mapsto x_{*}(t,y)$ is continuous (see, e.g., \cite{KD69}).
In the rest of the proof we focus on showing continuity of $t\mapsto x_{*}(t,y)$ and $y\mapsto x_{*}(t,y)$. We fix a rectangle $D$ as in Definition \ref{def:cont} and restrict our attention to this domain.

We start by proving our claim in the case when $x_*$ is non-decreasing in both $t$ and $y$. By the continuity of $v$ on $\cU$ we know that $\cC\cap \cU$ is an open set and $\cS\cap \cU$ is closed relatively to $\cU$. Then we can conclude that $x_{*}(t,y)$ is right-continuous in $y$ for each $t$ and right-continuous in $t$ for each $y$. Indeed, fix $(t,y) \in D$ and let $y_n\downarrow y$ as $n\to\infty$ and such that $(t, y_n) \in D$. Then 
\begin{align}\label{eq:rc}
\cS \ni (t,x_{*}(t,y_n),y_n) \xrightarrow{n \to \infty} (t,x_{*}(t,y+), y) \in \cS,
\end{align}
where the limit exists by monotonicity and it lies in $\cS$ by the closedness of $\cS$. Using $(t,x_{*}(t,y+), y) \in \cS$ and \eqref{eq:b-d}, we conclude that $x_*(t,y+)\le x_{*}(t,y)$. Since $x_{*}(t,y_n)\ge x_{*}(t,y)$ for all $n$'s we conclude that $x_{*}(t,y+)= x_{*}(t,y)$. An analogous argument holds for the right-continuity in time.

Next we prove the left-continuity of $y\mapsto x_{*}(t,y)$. Let us fix $(t_0,y_0) \in D$ and, arguing by contradiction, let us assume $x_{*}(t_0,y_0-)<x_{*}(t_0,y_0)$. Take $x_{*}(t_0,y_0-)<x_1<x_2<x_{*}(t_0,y_0)$. By the assumed monotonicity of the boundary we have
\begin{align*}
\Sigma_{t_0}:=\{t_0\}\times(x_1,x_2)\times(\tilde{y},y_0)\subset\cC\cap\cU
\end{align*}
\jp{for some $\tilde{y}<y_0$ sufficiently close to $y_0$,} and
\[
\Sigma_{t_0,y_0}:=\{t_0\}\times (x_1,x_2)\times\{y_0\}\subset \cS\cap\cU.
\]

By (iii) and (iv) in Assumption \ref{ass:all} we have that $u:=v-g$ satisfies the PDE \eqref{eq:pde-0}.
In particular, we also have 
\begin{equation}
\label{eq:pde-2}
(\partial_t +\cL -r)u(t_0,x,y)=-h(t_0,x,y), \qquad \text{for $(t_0,x,y)\in\Sigma_{t_0}$}.
\end{equation}
Using that $\beta_2 > 0$ in $\cU$, we rearrange terms above and obtain
\begin{equation}\label{eqn:u_yy}
\partial_{yy} u=-\frac{1}{\beta_2}\big[h+\partial_t u+\beta_1\partial_{xx}u+2\bar\beta \partial_{xy}u+\alpha_1\partial_x u+\alpha_2 \partial_y u -ru\big],\quad\text{on $\Sigma_{t_0}$.}
\end{equation}

Take an arbitrary $\varphi\in C^\infty_c(x_1,x_2)$ with $\varphi\ge 0$ and $\int_{x_1}^{x_2}\varphi(x)\ud x = 1$. We multiply both sides of \eqref{eqn:u_yy} by $\partial_x \varphi$ and integrate over $(x_1, x_2)$:
\begin{align}\label{eq:uyyb}
&\int_{x_1}^{x_2} \partial_{yy} u (t_0, x, y) \partial_x \varphi(x) \ud x \\
&=
- \int_{x_1}^{x_2} \frac{1}{\beta_2(t_0, x, y)} \big[h+\partial_t u+\alpha_1\partial_x u+\alpha_2 \partial_y u -ru\big](t_0, x, y)\ \partial_x \varphi(x)\ud x \notag\\
&\hspace{11pt} - \int_{x_1}^{x_2} \frac{1}{\beta_2(t_0, x, y)} \big[\beta_1\partial_{xx}u+2\bar\beta \partial_{xy}u\big](t_0, x, y)\ \partial_x \varphi(x) \ud x\notag\\[3pt]
&=: J_1(t_0, y) + J_2 (t_0, y).\notag
\end{align}
By $C^1$ regularity of $u$ and \eqref{eq:smoothfit}, using dominated convergence we have
\begin{align}\label{eqn:J_1}
\lim_{y \uparrow y_0} J_1(t_0, y) 
= & - \int_{x_1}^{x_2} \frac{h}{\beta_2} (t_0, x, y_0) \ \partial_x \varphi(x) \ud x=  \int_{x_1}^{x_2} \partial_x \bigg(\frac{h}{\beta_2}\bigg) (t_0, x, y_0) \ \varphi(x) \ud x \ge \delta,
\end{align}
where in the second equality we integrated by parts and the inequality is by Assumption \ref{ass:all}(iii), since $\varphi\ge 0$ and integrates to one. We integrate the second term, $J_2$, by parts and again use \eqref{eq:smoothfit}
\begin{equation}\label{eqn:J_2}
\begin{aligned}
\lim_{y \uparrow y_0} J_2(t_0, y) 
&= 
\lim_{y \uparrow y_0} \int_{x_1}^{x_2} \partial_x \bigg( \frac{\beta_1}{\beta_2}(t_0, \cdot, y) \partial_x \varphi \bigg) (x)\  \partial_x u(t_0, x, y) \ud x\\
&\hspace{12pt} +
\lim_{y \uparrow y_0} \int_{x_1}^{x_2} \partial_x \bigg( \frac{\bar \beta}{\beta_2}(t_0, \cdot, y) \partial_x \varphi \bigg) (x)\  \partial_y u(t_0, x, y) 
\ud x
= 0.
\end{aligned}
\end{equation}
By \eqref{eq:uyyb} and the limits in \eqref{eqn:J_1}--\eqref{eqn:J_2} there is $\hat y$ such that for all $y \in [\hat y, y_0)$ we have
\begin{equation}\label{eqn:delta_half}
\int_{x_1}^{x_2} \partial_{yy} u (t_0, x, y) \partial_x \varphi(x) \ud x > \frac{\delta}{2}.
\end{equation}

For any sufficiently small $\eps>0$, integrating over $y$ twice and using Fubini's theorem we have
\begin{align*}
\tfrac{\delta}{4}(y_0-\eps-\hat y)^2
&<
\int_{\hat{y}}^{y_0-\eps}\int_{y}^{y_0-\eps}\int_{x_1}^{x_2} \partial_{yy}u(t_0,x,\zeta)\ \partial_x \varphi(x)\ud x\,\ud \zeta\, \ud y\\
&=
\int_{x_1}^{x_2} \partial_x \varphi(x) \int_{\hat{y}}^{y_0-\eps} \big[ \partial_y u(t_0, x, y_0 - \eps) - \partial_y u (t_0, x, y) \big] \ud y \ud x\\
&=
\int_{x_1}^{x_2} \partial_x \varphi(x) \big[ (y_0 - \eps - \hat y) \partial_y u(t_0, x, y_0 - \eps) - u(t_0, x, y_0 - \eps) + u(t_0, x, \hat y) \big] \ud x,
\end{align*}
where the inequality is by the estimate \eqref{eqn:delta_half}.
Letting $\eps\to 0$ and using the dominated convergence theorem, the fact that $u=\partial_y u=0$ on $\Sigma_{t_0,y_0}$ and integration by parts we obtain 
\begin{align}\label{eq:contr0}
\tfrac{\delta}{4}(y_0-\hat y)^2\le \int_{x_1}^{x_2}\partial_x \varphi(x)\, u(t_0,x,\hat{y}) \ud x
= - \int_{x_1}^{x_2}\varphi(x)\, \partial_x  u(t_0,x,\hat{y}) \ud x \le 0,
\end{align}
where the final inequality is by Assumption \ref{ass:all}(iv). This contradicts the assumption that $\delta > 0$, so completes the proof that $y\mapsto x_{*}(t,y)$ is left-continuous (and therefore continuous).

We now prove the continuity of the map $t\mapsto x_{*}(t,y)$. For $(t_0,y_0)$ as above we argue by contradiction assuming that $x_{*}(t_0-,y_0)< x_{*}(t_0,y_0)$. Take $x_{*}(t_0-,y_0)<x_1<x_2<x_{*}(t_0,y_0)$. 
There are $\tilde{y}<y_0$ and $\tilde t<t_0$ such that $(\tilde{t},t_0)\times(\tilde{y},y_0) \subset D$ and
\[
x_{*}(t_0,y_0)\ge x_{*}(t_0,\tilde{y})>x_2>x_1>x_{*}(t_0-,y_0)\ge x_{*}(s,\zeta),
\]
for all $(s,\zeta)\in (\tilde{t},t_0)\times(\tilde{y},y_0)$, where the existence of $\tilde y$ follows from the continuity of $y\mapsto x_{*}(t_0,y)$, whereas the final inequality holds thanks to the monotonicity of $x_{*}$.
Thus, recalling the notation used above, we have $\Sigma_{t_0} \subset \cS\cap\cU$ and 
\begin{align}\label{eq:Sigma}
\Sigma:=(\tilde{t},t_0)\times(x_1,x_2)\times(\tilde{y},y_0) \subset \cC\cap\cU.
\end{align}
We remark that the inclusion $\Sigma_{t_0} \subset \cS\cap\cU$ is crucial in the rest of our proof and it is obtained thanks to the continuity of $y\mapsto x_*(t,y)$.

Let us take two arbitrary functions $\varphi \in C^{\infty}_{c}(x_1,x_2)$ and $\psi \in C^{\infty}_{c}(\tilde{y},y_0)$, such that $\varphi,\psi\ge 0$ with $\int_{x_1}^{x_2}\varphi(x)dx=1$ and $\int_{\tilde{y}}^{y_0}\psi(y)dy=1$. Thanks to the inclusion \eqref{eq:Sigma}, the equation \eqref{eq:pde-0} is satisfied on $\Sigma$. We multiply both sides of it by $\varphi(x)\psi(y)$ and integrate over $(x_1,x_2)\times (\tilde{y},y_0)$ to obtain, for all $t\in(\tilde t, t_0)$
\begin{align*}
&\int_{x_1}^{x_2}\!\int_{\tilde{y}}^{y_0}\! \left(\partial_t u+\cL u-r u\right)(t,x,y)\, \varphi(x)\psi(y)\ud y\, \ud x
=-\int_{x_1}^{x_2}\!\int_{\tilde{y}}^{y_0}\!h(t,x,y)\, \varphi(x)\psi(y)\ud y\,\ud x.
\end{align*}
Using integration by parts for the terms of $\cL$ involving second derivatives we get
\begin{align}
\label{eq:pde-t-2}
&\int_{x_1}^{x_2}\!\int_{\tilde{y}}^{y_0}\!\big(\partial_t u +\alpha_1 \partial_x u+ \alpha_2 \partial_y u-ru\big) (t,x,y)\, \varphi(x)\psi(y) \ud y\, \ud x\notag\\
&-\int_{x_1}^{x_2}\!\int_{\tilde{y}}^{y_0}\!\big[\big(\partial_x(\varphi\beta_1)\partial_x u +2\partial_x(\varphi\bar \beta)\partial_y u\big) \psi+\partial_x(\psi\beta_2)\partial_y u\, \varphi \big](t,x,y)\ud y\,\ud x\\
&=-\int_{x_1}^{x_2}\int_{\tilde{y}}^{y_0}h(t,x,y)\, \varphi(x)\psi(y)\ud y\,\ud x,\notag
\end{align}
for all $t\in (\tilde{t},t_0)$.
Letting $t \uparrow t_0$, by the dominated convergence and the $C^1$-regularity of $u$ we obtain
\begin{equation*}
0=\int_{x_1}^{x_2}\int_{\tilde{y}}^{y_0}h(t_0,x,y)\,\varphi(x)\psi(y)\ud y\, \ud x. 
\end{equation*}
Since $\varphi,\psi$ are arbitrary, the latter implies that $h(t_0,x,y)=0$ for all $(x,y)\in (x_1,x_2)\times(\tilde{y}, y_0)$. This is also equivalent to $(h/\beta_2)(t_0,x,y)=0$ for all $(x,y)\in (x_1,x_2)\times(\tilde{y}, y_0)$ since $\beta_2>0$ on $\cU$. This contradicts Assumption \ref{ass:all}(iii). Therefore $t\mapsto x_{*}(t,y)$ is continuous.

We show how to adapt the proof for a different monotonicity of $t\mapsto x_{*}(t,y)$ and $y\mapsto x_{*}(t,y)$. Assume that $t\mapsto x_{*}(t,y)$ is non-decreasing but $y\mapsto x_{*}(t,y)$ is non-increasing. By analogous arguments to those in \eqref{eq:rc} we conclude that $y\mapsto x_{*}(t,y)$ is left-continuous and $t\mapsto x_{*}(t,y)$ is right-continuous.  Arguing by contradiction, we assume that $x_{*}(t_0,y_0)>x_{*}(t_0,y_0+)$ and, with an abuse of notation, we define
\[
\Sigma_{t_0}:= \{t_0\}\times (x_1,x_2)\times (y_0, \tilde y), \quad \text{and} \quad \Sigma_{t_0,y_0}:=\{t_0\}\times (x_1,x_2)\times\{y_0\}\subset \cS\cap\cU
\]
for some $\tilde{y}>y_0$ and $x_1, x_2$ satisfying $x_{*}(t_0,y_0)>x_2 > x_1 > x_{*}(t_0,y_0+)$. Using \eqref{eq:pde-2} with $\Sigma_{t_0}$ given above and repeating verbatim the arguments employed previously in the proof with $y \uparrow y_0$ replaced by $y\downarrow y_0$ in \eqref{eqn:J_1}--\eqref{eqn:J_2} we show that there exists $\hat{y}\in (y_0,\tilde{y})$ such that \eqref{eqn:delta_half} holds for any $y \in (y_0, \hat y]$.
Integrating \eqref{eqn:delta_half} over $y$ twice we obtain for $\eps > 0$ sufficiently small
\begin{align*}
\tfrac{\delta}{4}(\hat y -y_0-\eps)^2
&<\int^{\hat{y}}_{y_0+\eps}\int^{y}_{y_0+\eps}\int_{x_1}^{x_2} \partial_{yy}u(t_0,x,\zeta)\, \partial_x \varphi(x)\ud x\,\ud \zeta\, \ud y\\
&= \int_{x_1}^{x_2} \big[ u(t_0, x, \hat y) - u(t_0, x, y_0+\eps) - (y - y_0 - \eps) \partial_y u(t_0, x, y_0 + \eps) \big] \, \partial_x \varphi(x)\ud x.
\end{align*}
Letting $\eps\to 0$, using that $u=\partial_y u=0$ on $\partial\cC$, and integrating by parts we arrive at a contradiction similarly as in \eqref{eq:contr0}. Continuity of $t\mapsto x_*(t,y)$ can now be proven analogously to the argument used in \eqref{eq:pde-t-2} but paying attention to the fact that $\tilde y>y_0$ in this case and, therefore, obvious changes are required.

The remaining two cases, in which $t\mapsto x_*(t,y)$ is non-increasing and $y\mapsto x_*(t,y)$ is either non-increasing or non-decreasing, can be treated analogously.
\end{proof}

\jp{For the continuous case illustrated in this section it is worth fleshing out precisely additional difficulties related to our 3-dimensional set-up compared to the 2-dimensional set-ups considered in \cite{DeA15} and \cite{P19}.}
\begin{remark}\label{rem:goran1}
\jp{Continuity of time dependent boundaries $t\mapsto x_*(t)$ is shown in \cite[Thm.\ 3.1]{DeA15} and \cite[Thm.\ 3]{P19} without the requirement $\partial_x (h/\beta_2)\ge \delta$ in our Assumption \ref{ass:all}(iii). In the proofs of those theorems a contradiction can be reached relying on a much weaker requirement that $h\le -\delta$ in a neighbourhood of each point of $\partial\cC$. An analogous approach in our set-up does not seem to work because our contradiction stems from the integration by parts in \eqref{eq:contr0}, which cannot be obtained if we replace $\partial_x \varphi$ with $\varphi$ in the first step of our proof (Eq.\ \eqref{eq:uyyb}).}
\end{remark}
\begin{remark}\label{rem:goran2}
\jp{It is shown in \cite[Thm.\ 17]{P19} that a discontinuity of the first kind (\cite[Def.~2]{P19}) of a boundary $y\mapsto x_*(y)$ implies that the (horizontal) smooth-fit must hold. That is why \cite[Cor.\ 20]{P19} states the continuity of the boundary $y\mapsto x_*(y)$ without assuming smooth-fit but still relying on it in the proof. In our 3-dimensional setting, we replace smooth-fit with $C^1$-regularity of the value function in Assumption \ref{ass:all}(iv) (motivated by the theory in \cite{DeAPe20}). We notice that the arguments from \cite[Thm.\ 17]{P19} cannot be used to prove that a discontinuity of the first kind of a the mapping $y\mapsto x_*(t,y)$ induces (any sort of) smooth-fit, because the process $(t,X,Y)$ is not bound to evolve on a plane (as it would be required for the argument from the proof of \cite[Thm.\ 17]{P19}).}
\end{remark}
\begin{remark}\label{rem:goran3}
\jp{The continuity of a boundary $y\mapsto x_*(y)$ is proven in \cite[Thm.\ 10 and Cor.\ 20]{P19} without requiring its monotonicity. In our case the monotonicity (see Assumption \ref{ass:all}(i)) is crucial in order to lift the continuity of $t\mapsto x_*(t,y)$ and $y\mapsto x_*(t,y)$ to the joint continuity of $(t,y)\mapsto x_*(t,y)$ using \cite{KD69}.}
\end{remark}
\jp{It is also worth pointing the interested reader to \cite[Thm.\ 7 and 12, Cor.\ 18 and 21]{P19}, where, in the 2-dimensional set-up, various other sufficient conditions for the continuity of optimal boundaries $y\mapsto x_*(y)$ and $t\mapsto x_*(t)$ are presented.}


\section{Continuity of the stopping boundary: the general case}\label{sec:con-jump}

In this section we prove an analogue of Theorem \ref{them:conti-1} but in the case when the operator $\cA$ is non-null. Assumption \ref{ass:all} remains in force throughout this section. However, due to the inclusion of the additional term $\cA$ in the infinitesimal operator, we need to make the following additional assumption:
\begin{assumption}
\label{ass:all2}
The function $u := v - g$ is such that $u\in D_\cA$ and $\cA u \in C(\cU)$. Moreover, 
\begin{align}\label{eq:assA}
\partial_x \big(\beta_2^{-1} \cA u\big) (t, x, y) \ge 0,\quad\text{for $(t,x,y)\in \cU$ such that $x < x^*(t,y)$}.
\end{align}
\end{assumption}

Sufficient conditions for the above assumption are discussed at the end this section. 

\begin{theorem}
\label{them:conti-2}
Under Assumptions \ref{ass:all} and \ref{ass:all2}, the optimal stopping boundary $(t,y)\mapsto x_{*}(t,y)$ is continuous on $\cU$ in the sense of Definition \ref{def:cont}.
\end{theorem}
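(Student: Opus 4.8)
The plan is to re-run the proof of Theorem \ref{them:conti-1} almost verbatim, treating the extra non-local term $\cA u$ as a continuous source that plays the same role as $h$. Under Assumptions \ref{ass:all} and \ref{ass:all2} the function $u=v-g$ now solves
\[
(\partial_t+\cL^\circ-r)u=-(h+\cA u)\quad\text{on $\cC\cap\cU$,}
\]
with the \emph{same} smooth-fit conditions \eqref{eq:smoothfit} on $\partial\cC\cap\cU$, since those concern only $u$ and its first-order derivatives and are untouched by the addition of $\cA$. Because $\cA u\in C(\cU)$ by Assumption \ref{ass:all2}, the new source $h+\cA u$ is continuous on $\cU$, so every dominated-convergence limit used in the continuous case remains valid. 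As before, the monotonicity of $x_*$ furnishes right-continuity in each variable for free (the argument around \eqref{eq:rc} uses only the closedness of $\cS$), so it remains to establish the opposite one-sided continuity.

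For the left-continuity of $y\mapsto x_*(t,y)$ I would argue by contradiction with a jump at $(t_0,y_0)$ and solve the PDE for $\partial_{yy}u$ exactly as in \eqref{eqn:u_yy}, with $h$ replaced by $h+\cA u$. Multiplying by $\partial_x\varphi$ and integrating as in \eqref{eq:uyyb}, the term $\cA u$ joins $J_1$, while $J_2$ (which contains only the diffusion coefficients $\beta_1,\bar\beta$ and the first-order derivatives $\partial_x u,\partial_y u$) is unchanged and still vanishes in the limit by smooth fit. After integration by parts the new contribution to $\lim_{y\uparrow y_0}J_1$ equals $\int_{x_1}^{x_2}\partial_x(\beta_2^{-1}\cA u)(t_0,x,y_0)\,\varphi(x)\,\ud x\ge 0$, where the sign is precisely the content of \eqref{eq:assA}: on the limiting slice $\Sigma_{t_0,y_0}\subset\cS$ one has $x<x_*(t_0,y_0)$, so the hypothesis applies. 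Hence the bound $\lim_{y\uparrow y_0}J_1\ge\delta$ is preserved, estimate \eqref{eqn:delta_half} holds, and the double-integration argument culminating in \eqref{eq:contr0} yields the same contradiction with $\partial_x u\ge 0$.

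For the continuity of $t\mapsto x_*(t,y)$ I would again follow the proof verbatim, integrating the PDE over the open box $\Sigma$ against $\varphi(x)\psi(y)$: the only change to \eqref{eq:pde-t-2} is an extra term $\int\!\!\int\cA u\,\varphi\psi$. Letting $t\uparrow t_0$, the diffusive part vanishes by smooth fit on $\Sigma_{t_0}\subset\cS$, while the non-local term converges to $\int\!\!\int\cA u(t_0,x,y)\varphi\psi$ by continuity of $\cA u$. As $\varphi,\psi$ are arbitrary I conclude $(h+\cA u)(t_0,\cdot,\cdot)\equiv 0$ on the box, hence $\partial_x\big((h+\cA u)/\beta_2\big)(t_0,\cdot,\cdot)\equiv 0$ there; but on $\Sigma_{t_0}$ we again have $x<x_*(t_0,y)$, so \eqref{eq:assA} gives $\partial_x(\beta_2^{-1}\cA u)\ge 0$ and Assumption \ref{ass:all}(iii) gives $\partial_x(h/\beta_2)\ge\delta$, forcing that derivative to be $\ge\delta>0$ — a contradiction. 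The remaining monotonicity configurations are dispatched by the same substitutions as at the end of the proof of Theorem \ref{them:conti-1}.

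The point demanding care — and the reason Assumption \ref{ass:all2} is needed — is that, unlike the local terms, $\cA u$ cannot be annihilated by smooth fit: it evaluates $u$ at the jump-translated points $(x,y)+\gamma_i(t,x,y,\xi)$, which generally lie off the slice, outside $\cU$, or inside $\cC$. The one-sided bound \eqref{eq:assA}, imposed exactly on the stopping side $x<x_*(t,y)$ where our limiting slices $\Sigma_{t_0}$ and $\Sigma_{t_0,y_0}$ live, is what guarantees that after differentiation in $x$ this surviving term carries the correct sign and does not cancel the strict positivity $\delta$ inherited from the gain function; the regularity $\cA u\in C(\cU)$ is what secures the passage to the limit under dominated convergence.
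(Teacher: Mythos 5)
Your proposal is correct, and its treatment of the right-continuity and of the left-continuity of $y\mapsto x_*(t,y)$ coincides with the paper's proof: the paper likewise folds $\cA u$ into the term $J_1$ of \eqref{eq:uyyb}, integrates by parts, and invokes \eqref{eq:assA} on the limiting slice $\Sigma_{t_0,y_0}\subset\cS$ (where $x<x_*(t_0,y_0)$) to preserve the lower bound $\delta$, after which \eqref{eqn:delta_half}--\eqref{eq:contr0} go through unchanged. Where you genuinely depart from the paper is the time-continuity step. You keep the unweighted test function $\varphi(x)\psi(y)$ of \eqref{eq:pde-t-2}, pass to the limit $t\uparrow t_0$, deduce the pointwise identity $(h+\cA u)(t_0,\cdot,\cdot)\equiv 0$ on the box, and only then divide by $\beta_2$ and differentiate in $x$, playing $\partial_x(h/\beta_2)\ge\delta$ against $\partial_x\big(\beta_2^{-1}\cA u\big)\ge 0$; this is sound, since both one-sided sign conditions hold on $\Sigma_{t_0}$, where $x<x_*(t_0,y)$, and the splitting of the derivative is legitimate because each of the two derivatives exists by Assumption \ref{ass:all}(iii) and \eqref{eq:assA}. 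The paper instead multiplies the equation by $\tfrac{1}{\beta_2}\,\psi\,\partial_x\varphi$ from the outset (see \eqref{eqn:phi_psi}) and never produces a pointwise identity: it shows $I_1(t)\to 0$, $\lim_{t\uparrow t_0}I_2(t)\le 0$ via \eqref{eqn:I_2_jump} and \eqref{eq:assA}, and $\liminf_{t\uparrow t_0}I_3(t)\ge\delta$, contradicting $I_1+I_2=I_3$. The two routes use the same hypotheses at the same points, and yours is arguably the more transparent; what the paper's weighting buys, however, is that after division by $\beta_2$ the coefficient of $\partial_{yy}u$ equals one, so the integration by parts in $y$ costs only $\partial_y\psi$ and no $y$-derivative of $\beta_2$ is ever needed. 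Re-using \eqref{eq:pde-t-2} verbatim, as you do, requires the term $\partial_y(\psi\beta_2)$ (written there as $\partial_x(\psi\beta_2)$, evidently a misprint), i.e.\ a $y$-derivative of $\beta_2$ that Assumption \ref{ass:all}(ii) does not grant. This small regularity debt is inherited from the proof of Theorem \ref{them:conti-1} rather than created by you, but it is precisely what the authors' choice of test function in Theorem \ref{them:conti-2} is designed to avoid.
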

\begin{proof}
The proof follows the same ideas as the one of Theorem \ref{them:conti-1}, hence we only highlight small technical differences. We will only provide full arguments for the case in which both $t\mapsto x_*(t,y)$ and $y\mapsto x_*(t,y)$ are non-decreasing. All the remaining cases can be treated analogously as shown at the end of the proof of Theorem \ref{them:conti-1}.

We fix a rectangle $D$ as in the Definition \ref{def:cont} and restrict our attention to this domain. From \eqref{eq:rc} we obtain that $t\mapsto x_*(t,y)$ and $y\mapsto x_*(t,y)$ are right-continuous. To prove left-continuity in $x$, we fix $(t_0,y_0) \in D$ and, arguing by contradiction, we assume $x_{*}(t_0,y_0-)<x_{*}(t_0,y_0)$. We take $x_{*}(t_0,y_0-)<x_1<x_2<x_{*}(t_0,y_0)$ and by the monotonicity of the boundary we have 
\begin{align*}
\Sigma:=(\tilde{t},t_0)\times(x_1,x_2)\times(\tilde{y},y_0) \subset \cC\cap\cU\,,\quad \Sigma_{t_0}:=\{t_0\}\times(x_1,x_2)\times(\tilde y,y_0)\subset\cC\cap\cU
\end{align*}
for some $\tilde{y}<y_0$ and $\tilde{t}<t_0$ sufficiently close to $(t_0,y_0)$, and
\[
\Sigma_{t_0,y_0}:=\{t_0\}\times (x_1,x_2)\times\{y_0\}\subset \cS\cap\cU.
\]

Equation \eqref{eqn:u_yy} has an additional term due to the operator $\cA$:
\begin{equation*}
\partial_{yy} u=-\frac{1}{\beta_2}\big[h+\partial_t u+\beta_1\partial_{xx}u+2\bar\beta \partial_{xy}u+\alpha_1\partial_x u+\alpha_2 \partial_y u -ru+ \cA u\big],\quad\text{on $\Sigma_{t_0}$}.
\end{equation*}
We add this term to $J_1$ in \eqref{eq:uyyb} and obtain an analogue of \eqref{eqn:J_1} by the dominated convergence:
\begin{equation*}
\begin{aligned}
\lim_{y \uparrow y_0} J_1(t_0, y) 
&= - \int_{x_1}^{x_2} \frac{h + \cA u}{\beta_2} (t_0, x, y_0) \ \partial_x \varphi(x) \ud x \\
&= \int_{x_1}^{x_2} \partial_x \bigg(\frac{h}{\beta_2}\bigg) (t_0, x, y_0) \ \varphi(x) \ud x 
+
\int_{x_1}^{x_2} \partial_x \bigg(\frac{\cA u}{\beta_2}\bigg) (t_0, x, y_0) \ \varphi(x) \ud x
\ge \delta,
\end{aligned}
\end{equation*}
where the first equality is by the continuity of $h, \cA u, \beta_2$ on $\cU$. The first term in the second line above is bounded from below by $\delta$ thanks to Assumption \ref{ass:all}(iii). The second term is non-negative by Assumption \ref{ass:all2} and the non-negativity of $\varphi$. The rest of the proof continues as in Theorem \ref{them:conti-1} and eventually we obtain the contradiction in \eqref{eq:contr0}. Hence $y\mapsto x_*(t_0,y)$ is continuous.

For the continuity of $t\mapsto x_*(t,y)$, using once again that $y\mapsto x_*(t,y)$ is continuous we can guarantee that $\Sigma_{t_0}\subset \cS\cap\cU$ for $\tilde y<y_0$ and sufficiently close. The next part of the proof differs slightly from the one in Theorem \ref{them:conti-1}. 

Let us take two arbitrary functions $\varphi \in C^{\infty}_{c}(x_1,x_2)$ and $\psi \in C^{\infty}_{c}(\tilde{y},y_0)$, such that $\varphi,\psi\ge 0$ with $\int_{x_1}^{x_2}\varphi(x)dx=1$ and $\int_{\tilde{y}}^{y_0}\psi(y)dy=1$. We multiply the equation
\[
(\partial_t+\cL-r)u=-h\quad\text{in $\cC\cap\mathcal U$}
\]
by $\tfrac{1}{\beta_2} \psi\, \partial_x \varphi $ and integrate over $(x_1,x_2)\times (\tilde{y},y_0)$ to obtain, for all $t\in(\tilde t, t_0)$
\begin{equation}\label{eqn:phi_psi}
\begin{aligned}
&\int_{x_1}^{x_2}\!\!\!\int_{\tilde y}^{y_0}\!\partial_x \varphi(x)\, \psi(y)\big[\frac{1}{\beta_2}(\partial_t\!+\!\cL^\circ-r)u\big](t,x,y)\ud x\ud y\\
&\hspace{12pt}+\!\int_{x_1}^{x_2}\!\!\!\int_{\tilde y}^{y_0}\! \partial_x \varphi(x)\, \psi(y) \frac{\cA u}{\beta_2}(t,x,y)\ud x\ud y\\
&=-\!\int_{x_1}^{x_2}\!\!\!\int_{\tilde y}^{y_0}\!\partial_x \varphi(x)\, \psi(y) \frac{h}{\beta_2}(t,x,y)\ud x\ud y.
\end{aligned}
\end{equation}
For the first integral on the left-hand side above, integration by parts of the second derivatives gives
\begin{align*}
I_1(t):=&\int_{x_1}^{x_2}\!\!\!\int_{\tilde y}^{y_0}\!\partial_x \varphi(x)\, \psi(y)\big[\frac{1}{\beta_2}(\partial_t\!+\!\cL^\circ-r)u\big](t,x,y)\ud x\ud y\\
=&\int_{x_1}^{x_2}\!\!\!\int_{\tilde y}^{y_0}\!\partial_x \varphi(x)\, \psi(y)\big[\frac{1}{\beta_2}(\partial_t +  \alpha_1 \partial_x + \alpha_2 \partial_y - r)u\big](t,x,y)\ud x\ud y\\
&- \int_{x_1}^{x_2}\!\!\!\int_{\tilde y}^{y_0}\! \psi(y)\big[\partial_{x} \big(\beta_1/\beta_2\,\partial_x \varphi\big) \partial_x u + 2 \partial_x \big(\bar \beta/ \beta_2\,\partial_x \varphi \big) \partial_y u \big](t,x,y)\ud x\ud y\\
&- \int_{x_1}^{x_2}\!\!\!\int_{\tilde y}^{y_0}\! \partial_x \varphi(x)\, \partial_y \psi(y)\, \partial_y u(t,x,y)\ud x\ud y.
\end{align*}
Thus, by the dominated convergence theorem, letting $t\uparrow t_0$ we have $I_1(t)\to 0$, which will be used later.
We denote the the second term in \eqref{eqn:phi_psi} by
\[
I_2(t):=\int_{x_1}^{x_2}\!\!\!\int_{\tilde y}^{y_0}\! \partial_x \varphi(x)\, \psi(y) \frac{\cA u}{\beta_2}(t,x,y)\ud x\ud y.
\]
Recalling that $\cA u$ and $\beta_2$ are continuous we first take the limit as $t \uparrow t_0$ and then integrate by parts to obtain
\begin{equation}\label{eqn:I_2_jump}
\begin{aligned}
\lim_{t \uparrow t_0} I_2(t)
&=
\int_{x_1}^{x_2}\!\!\!\int_{\tilde y}^{y_0}\! \partial_x \varphi(x)\, \psi(y) \frac{\cA u}{\beta_2}(t_0,x,y)\ud x\ud y\\
&=
- \int_{x_1}^{x_2}\!\!\!\int_{\tilde y}^{y_0}\! \varphi(x)\, \psi(y) \partial_x \Big(\frac{\cA u}{\beta_2}\Big)(t_0,x,y)\ud x\ud y
\le 0,
\end{aligned}
\end{equation}
where the last inequality is by Assumption \ref{ass:all2} since the integration is over $\Sigma_{t_0} \subset \cS \setminus \graph(x^*)$ ($\graph(x^*)$ denotes the graph of $x^*$) and $\varphi, \psi \ge 0$.

For the expression on the right-hand side of \eqref{eqn:phi_psi} we have 
\begin{align*}
I_3(t):=&-\!\int_{x_1}^{x_2}\!\!\!\int_{\tilde y}^{y_0}\!\partial_x \varphi(x)\, \psi(y) \frac{h}{\beta_2}(t,x,y)\ud x\ud y\\
=&\!\int_{x_1}^{x_2}\!\!\!\int_{\tilde y}^{y_0}\!\varphi(x)\, \psi(y) \partial_x \Big(\frac{h}{\beta_2}\Big)(t,x,y)\ud x\ud y \ge \delta,
\end{align*}
where the last inequality is by Assumption \ref{ass:all}(iii). Using the dominated convergence theorem we obtain $\liminf_{t\uparrow t_0}I_3(t) \ge \delta$.

Summarising, we have shown that
\[
\lim_{t\uparrow t_0}I_1(t)=0,\quad \lim_{t\uparrow t_0}I_2(t) \le 0,\quad \liminf_{t\uparrow t_0}I_3(t) \ge \delta.
\]
Combining the above with the fact that $I_1(t)+I_2(t)=I_3(t)$ for all $t\in(\tilde t, t_0)$ leads to a contradiction.
\end{proof}

\subsection{Sufficient conditions for jump diffusions}
Below we give sufficient conditions for Assumption \ref{ass:all2} when the operator $\cA$ is of the form \eqref{eq:cA}. For the clarity of exposition, we present results for the case of one jump component, i.e., $L=1$. An extension to $L > 1$ is immediate. We make the following assumption:

\begin{assumption}\label{ass:jumps}
The function $\gamma_1$ is continuous in $(t,x,y)$ for each fixed $\xi \in \R^d$, $u=v-g\in C^{0,1}([0,T)\times\cO)$ and one of the following holds:
\begin{itemize}
 \item[(A.i)] the function $u$ is bounded;
 \item[(A.ii)] $\nu_1$ has a compact support and $\gamma_1$ is bounded on compact subsets of $[0, T] \times \cO \times \R^d$,
 \item[(A.iii)] $(t,x,y) \mapsto \sup_{\xi \in \R^d} \| \gamma_1(t,x,y, \xi)\|$ is bounded for $(t, x, y)$ in a compact set.
\end{itemize}
\end{assumption}

The next lemma provides sufficient conditions implying $u\in D_\cA$.

\begin{lemma}\label{lem:sc1}
Let Assumptions \ref{ass:jumps} hold. If $\nu_1$ satisfies 
\begin{align}\label{eq:integr-g}
\int_{\R^d} (\bar\gamma_1(\xi) \wedge 1)\, \nu_1(\ud \xi) < \infty,
\end{align}
then $\cA u$ is well defined (i.e.,\ $u\in D_\cA$) and $\cA u \in C([0, T) \times \cO)$. 
\end{lemma}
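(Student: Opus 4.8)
The plan is to establish both assertions—well-posedness ($u\in D_\cA$) and continuity of $\cA u$—through a single dominated-convergence argument, once a dominating function for the integrand is constructed that is $\nu_1$-integrable and \emph{locally uniform} in $(t,x,y)$. Write the integrand as
\[
G(t,x,y,\xi):=u\big(t,(x,y)+\gamma_1(t,x,y,\xi)\big)-u(t,x,y)-\indd{\bar\gamma_1(\xi)<1}\,\nabla u(t,x,y)\cdot\gamma_1(t,x,y,\xi),
\]
and split $\R^d$ into the small-jump region $\{\bar\gamma_1<1\}$ and the large-jump region $\{\bar\gamma_1\ge 1\}$. Fix a compact neighbourhood $Q\subset[0,T)\times\cO$ of the point of interest.

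First I would treat the small jumps. On $\{\bar\gamma_1<1\}$ the compensator is present, so by the fundamental theorem of calculus applied to $\theta\mapsto u(t,(x,y)+\theta\gamma_1)$—legitimate since $u\in C^{0,1}$—one gets
\[
G(t,x,y,\xi)=\int_0^1\big[\nabla u\big(t,(x,y)+\theta\gamma_1\big)-\nabla u(t,x,y)\big]\cdot\gamma_1\,\ud\theta .
\]
Letting $M$ bound $\|\nabla u\|$ on a compact subset of $\cO$ containing all destinations $(x,y)+\theta\gamma_1$ (with $\|\gamma_1\|\le\bar\gamma_1<1$ and $(t,x,y)\in Q$), this yields the linear estimate $|G|\le 2M\|\gamma_1\|\le 2M\,\bar\gamma_1$, uniformly over $Q$; integrating against $\nu_1$ and invoking \eqref{eq:integr-g} bounds the small-jump contribution. \emph{This is the crux:} because $u$ is only $C^1$ (not $C^2$) the Taylor remainder is linear rather than quadratic in $\gamma_1$, which is exactly why the strengthened integrability \eqref{eq:integr-g} (with $\bar\gamma_1\wedge1$ in place of $\bar\gamma_1^2\wedge1$) is required.

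Next the large jumps, where $G=u(t,(x,y)+\gamma_1)-u(t,x,y)$ and the only difficulty is a possibly unbounded $u$. Under (A.i) I bound $|G|\le 2\|u\|_\infty$ directly. Under (A.ii), compact support of $\nu_1$ together with boundedness of $\gamma_1$ on compacts confines all destinations $(x,y)+\gamma_1$ to a bounded set as $(t,x,y)$ ranges over $Q$, so continuity of $u$ gives a uniform bound; under (A.iii) the uniform-in-$\xi$ bound on $\|\gamma_1\|$ over the compact $Q$ plays the same role. In each case $|G|$ is bounded by a constant on $\{\bar\gamma_1\ge1\}$, and since \eqref{eq:integr-g} forces $\nu_1(\{\bar\gamma_1\ge1\})<\infty$, this contribution is finite and dominated uniformly over $Q$. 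Combining the two regions produces a $\nu_1$-integrable dominating function $H(\xi)$ independent of $(t,x,y)\in Q$; in particular the defining integral converges absolutely, i.e.\ $u\in D_\cA$.

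Finally, for continuity I would observe that for each fixed $\xi$ the map $(t,x,y)\mapsto G(t,x,y,\xi)$ is continuous, since $u$ and $\nabla u$ are continuous and $\gamma_1(\cdot,\xi)$ is continuous in $(t,x,y)$ by Assumption \ref{ass:jumps} (the indicator being independent of $(t,x,y)$). Taking any sequence converging to $(t,x,y)$ and eventually lying in $Q$, the bound $H$ applies and the dominated convergence theorem gives convergence of $\cA u$, so $\cA u\in C([0,T)\times\cO)$. The main obstacle is precisely the construction of the locally uniform dominating function: the small-jump bound must exploit the first-order Taylor remainder (hence \eqref{eq:integr-g}), while the large-jump bound must neutralise the potential unboundedness of $u$ through the mutually alternative hypotheses (A.i)--(A.iii). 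A minor technical point to dispatch along the way is ensuring the connecting segments $(x,y)+\theta\gamma_1$ and the destinations remain in $\cO$, where $u$ and $\nabla u$ are defined.
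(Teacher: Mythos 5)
Your proposal is correct and follows essentially the same route as the paper's proof: the same splitting into small jumps $\{\bar\gamma_1<1\}$ and large jumps $\{\bar\gamma_1\ge 1\}$, the same first-order Taylor/fundamental-theorem-of-calculus representation with the linear bound $\bar\gamma_1$ exploiting \eqref{eq:integr-g}, the same case-by-case use of (A.i)--(A.iii) for the large jumps, and dominated convergence (with bounds locally uniform in $(t,x,y)$) for continuity. Packaging the estimates into a single dominating function rather than treating the two integrals separately is only a cosmetic difference.
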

\begin{proof}
We rewrite $\cA u$ as follows:
\begin{equation}
\label{eqn:cA_terms}
\begin{aligned}
&(\cA u) (t, x, y) \\
&=\! 
\int_{\{\xi \in \R^d:\, \bar \gamma_1(\xi)< 1\}}\Big(u(t,(x,y)\! +\!\gamma_1(t, x, y, \xi))\!-\!u(t,x,y)
\!-\! \nabla u(t,x,y) \cdot \gamma_1(t, x, y, \xi)\Big)\nu_1(\ud \xi)\\
&\hspace{11pt}+
\int_{\{\xi \in \R^d:\, \bar \gamma_1(\xi)\ge 1\}}\Big(u(t,(x,y) +\gamma_1(t, x, y, \xi))-u(t,x,y)
\Big)\nu_1(\ud \xi)\\
&= (I_1) + (I_2).
\end{aligned}
\end{equation}
Let us first consider $(I_2)$. Using \eqref{eq:integr-g}, Assumption \ref{ass:jumps}(A.i) implies that $(I_2)$ is well-defined and, by the dominated convergence theorem, depends continuously on $(t, x, y) \in [0,T) \times \cO$. For any compact subset $Z \subset [0, T) \times \cO$, either Assumption \ref{ass:jumps} (A.ii) or (A.iii) yields that the set
\[
\big\{ \big(t,(x,y) +\gamma_1(t, x, y, \xi)\big) :\ \xi \in \supp (\nu_1), (t, x, y) \in Z \big\},
\]
is bounded. Combining this with the continuity of $u$ (hence, boundedness on compacts) shows that the term $(I_2)$ in \eqref{eqn:cA_terms} is well-defined and the dominated convergence theorem can be applied to deduce its continuous dependence on $(t, x, y)$.

Next we study term $(I_1)$ of \eqref{eqn:cA_terms} using ideas from \cite[Ch.\ 2]{Garroni}. We apply Taylor's formula to obtain
\begin{align*}
&\int_{\{\xi \in \R^d:\, \bar \gamma_1(\xi) < 1\}}\Big|u(t,(x,y)\! +\!\gamma_1(t, x, y, \xi))\!-\!u(t,x,y)
\!-\! \nabla u(t,x,y) \cdot \gamma_1(t, x, y, \xi)\Big|\ \nu_1(\ud \xi)\\
&=\!
\int_{\{\xi \in \R^d:\, \bar \gamma_1(\xi) < 1\}} \bigg|\int_{0}^1 \Big(\nabla u(t,(x,y)\! +\! \theta \gamma_1(t, x, y, \xi))\! -\! \nabla u(t,x,y) \Big) \cdot \gamma_1(t, x, y, \xi) \ \ud \theta \bigg|\, \nu_1(\ud \xi)\\
&\le\!
\int_{\{\xi \in \R^d:\, \bar \gamma_1(\xi) < 1\}} \int_{0}^1 \big\|\nabla u(t,(x,y)\! +\! \theta \gamma_1(t, x, y, \xi))\! -\! \nabla u(t,x,y) \big\|  \|\gamma_1(t, x, y, \xi)\| \ \ud \theta\, \nu_1(\ud \xi)\\
&\le\!
\int_{\{\xi \in \R^d:\, \bar \gamma_1(\xi) < 1\}} \int_{0}^1 \big\|\nabla u(t,(x,y)\! +\! \theta \gamma_1(t, x, y, \xi))\! -\! \nabla u(t,x,y) \big\|\, \bar\gamma_1(\xi) \ \ud \theta\, \nu_1(\ud \xi),
\end{align*}
where we used Cauchy-Schwartz inequality in the first inequality and the bound $\|\gamma_1\| \le \bar \gamma_1$ in the last inequality (recall also that $\|\,\cdot\,\|$ is the Euclidean norm in $\R^2$).
Since the integration is over $\xi$ such that $\bar \gamma_1(\xi) < 1$ and $u \in C^{0,1}([0,T)\times\cO)$, we conclude that the norm of the difference of gradients is bounded. By assumption, $\int_{\{\xi \in \R^d:\, \bar \gamma_1(\xi) < 1\}} \bar\gamma_1(\xi) \ \nu_1(\ud \xi) < \infty$, so in combination with the above this completes the proof that $(I_1)(t,x,y)<\infty$ for all $(t,x,y)\in[0,T)\times\cO$. Since the integration is over $\xi$ such that $\big(t,(x,y)+\theta\gamma_1(t,x,y,\xi)\big)$ is in a compact set, a uniform bound is available for the norm of the difference between gradients. This implies the continuity of term $(I_1)$ of \eqref{eqn:cA_terms} in $(t, x, y)$ by the dominated convergence.
\end{proof}

While the results above give us sufficient conditions for the first part of Assumption \ref{ass:all2}, the next lemma gives us sufficient conditions for \eqref{eq:assA}. Thus combining Lemma \ref{lem:sc1} and the next one, we have sufficient conditions for Assumption \ref{ass:all2}.
\begin{lemma}\label{lem:cA_der}
Let Assumption \ref{ass:all} and Assumption \ref{ass:jumps} hold with the exclusion of {\em(A.i)}. Let $u=v-g\in D_\cA$ and assume further that 
\begin{itemize}
 \item[(a)] $\partial_x \beta_2\le 0$,
 \item[(b)] $x \mapsto \gamma_1^{(1)}(t,x,y,\xi)$ (the first coordinate of $\gamma_1$) is continuously differentiable with $\partial_x \gamma^{(1)}_1 \ge -1$,
 \item[(c)] $\gamma_1^{(2)}$ (the second coordinate of $\gamma_1$) does not depend on $x$, i.e. $\partial_x \gamma^{(2)}_1 =0$, 
 \item[(d)] $\partial_x u$ is non-negative on $[0,T)\times\cO$.
\end{itemize}
Then for $(t,x,y)\in \cU$ such that $x < x^*(t,y)$ we have
\begin{equation}\label{eqn:cA_der}
\partial_x (\cA u)(t,x,y) = \int_{\R^d} \partial_x u \Big(t,(x,y) +\gamma_1(t, x, y, \xi)\Big)\, \big(1 + \partial_x \gamma_1^{(1)}(t, x, y, \xi)\big) \, \nu_1(\ud \xi) \ge 0
\end{equation}
and 
\[
\partial_x \big(\beta_2^{-1} \cA u\big) (t, x, y) \ge 0. 
\]
\end{lemma}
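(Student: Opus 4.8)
The plan is to exploit that the evaluation point lies strictly inside the stopping region, where $u=v-g$ and its gradient vanish. The first step is to prove that
\begin{equation*}
u(t,x',y)=0\quad\text{and}\quad \nabla u(t,x',y)=0\qquad\text{whenever }x'<x_*(t,y).
\end{equation*}
The vanishing of $u$ is immediate since such points belong to $\cS$. Fixing $(t,y)$, the map $x'\mapsto u(t,x',y)$ is identically zero on the whole interval $\{x'<x_*(t,y)\}$, so $\partial_x u(t,x',y)=0$ there. For $\partial_y u$ I would invoke the monotonicity of $x_*(t,\cdot)$ from Assumption~\ref{ass:all}(i): in the non-decreasing case $x_*(t,y'')\ge x_*(t,y)>x'$ for $y''\ge y$, hence $u(t,x',y'')=0$ for $y''$ in a right-neighbourhood of $y$; the right derivative of $y''\mapsto u(t,x',y'')$ at $y$ is thus zero, and since $u\in C^{0,1}$ the (two-sided) derivative $\partial_y u(t,x',y)$ must equal it. The non-increasing case is symmetric, using a left-neighbourhood.

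With this in hand, for $x'$ in the neighbourhood $\{x'<x_*(t,y)\}$ of $x$ both $-u(t,x',y)$ and the compensator term $\indd{\bar\gamma_1(\xi)<1}\nabla u(t,x',y)\cdot\gamma_1$ in \eqref{eq:cA} vanish, so $\cA u$ collapses to the local representation
\begin{equation*}
(\cA u)(t,x',y)=\int_{\R^d}u\big(t,(x',y)+\gamma_1(t,x',y,\xi)\big)\,\nu_1(\ud\xi),
\end{equation*}
the integral being finite because $u\in D_\cA$. I would then differentiate this in $x'$ at $x'=x$. Since $\gamma_1^{(2)}$ is independent of $x$ by hypothesis (c), the second argument $y+\gamma_1^{(2)}$ does not move, and the chain rule (requiring only $u\in C^{0,1}$ and (b)) gives the pointwise derivative $\partial_x u\big(t,(x,y)+\gamma_1\big)\big(1+\partial_x\gamma_1^{(1)}\big)$ of the integrand, yielding \eqref{eqn:cA_der}. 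Non-negativity follows at once: $1+\partial_x\gamma_1^{(1)}\ge0$ by (b) and $\partial_x u\ge0$ on $[0,T)\times\cO$ by (d), so the integrand is non-negative and $\partial_x(\cA u)\ge0$.

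For the final inequality I would use the product rule
\begin{equation*}
\partial_x\big(\beta_2^{-1}\cA u\big)=\beta_2^{-1}\partial_x(\cA u)-\beta_2^{-2}(\partial_x\beta_2)\,\cA u.
\end{equation*}
The first term is non-negative since $\beta_2>0$ by Assumption~\ref{ass:all}(ii) and $\partial_x(\cA u)\ge0$ by the previous step. For the second term, the same local representation shows $(\cA u)(t,x,y)=\int_{\R^d}u(t,(x,y)+\gamma_1)\,\nu_1(\ud\xi)\ge0$ because $u=v-g\ge0$; combined with $\partial_x\beta_2\le0$ from (a) this makes $-\beta_2^{-2}(\partial_x\beta_2)\,\cA u\ge0$. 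Hence $\partial_x(\beta_2^{-1}\cA u)\ge0$, as claimed.

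The step I expect to cause the most trouble is the rigorous interchange of $\partial_x$ with the $\nu_1$-integral, since near small jumps $\nu_1$ may carry infinite mass. I would argue via difference quotients and the mean value theorem, treating small and large jumps separately: for large jumps ($\bar\gamma_1\ge1$) the mass $\nu_1(\{\bar\gamma_1\ge1\})$ is finite by \eqref{eqn:Levy_condition} and the shifted points remain in a compact set by Assumption~\ref{ass:jumps}(A.ii) or (A.iii), so continuity (hence local boundedness) of $\partial_x u$ and of $\partial_x\gamma_1^{(1)}$ supplies an integrable dominating function; for small jumps ($\bar\gamma_1<1$) I would exploit that the shifted points approach $(x,y)$, where $\partial_x u=0$, so that $\partial_x u$ at the shifted point is small, in order to dominate the difference quotients and pass to the limit by dominated convergence. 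Making this domination fully rigorous under only $C^{0,1}$ regularity of $u$ is the delicate point; it mirrors the estimates already developed in the proof of Lemma~\ref{lem:sc1}, which I would reuse.
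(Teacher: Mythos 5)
Your proposal follows the paper's proof essentially step for step: the paper likewise restricts to points of the stopping set where $u=\partial_x u=\partial_y u=0$, so that $\cA u$ collapses to $\int_{\R^d} u\big(t,(x,y)+\gamma_1(t,x,y,\xi)\big)\,\nu_1(\ud\xi)$, then computes one-sided difference quotients with the increment $\zeta(\xi,\eps)=\eps+\gamma_1^{(1)}(t,x+\eps,y,\xi)-\gamma_1^{(1)}(t,x,y,\xi)$ (using (c) to freeze the second coordinate), passes to the limit by dominated convergence invoked via (A.ii)/(A.iii), obtains the sign from (b) and (d), and finishes with the same quotient-rule argument using (a), $\beta_2>0$ and $u\ge 0$. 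The only divergence is in the delicate interchange step, where you split small and large jumps while the paper applies dominated convergence to the whole integral, relying on (A.ii)/(A.iii) to confine all shifted points to a compact set on which $\partial_x u$ is bounded; note in this respect that under (A.ii) a Radon measure with compact support is automatically finite, so the infinite small-jump mass you worry about can only occur under (A.iii), where your proposed domination (continuity of $\partial_x u$ alone gives no decay rate) is no more complete than the paper's own terse justification.
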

\begin{proof}
Take $(t,x,y)\in \cU$ such that $x < x^*(t,y)$, i.e., $(t,x,y) \in \cS$. Since $u = \partial_x u = \partial_y u = 0$ on $\cS$, we have
\begin{equation}\label{eqn:cA_on_S}
(\cA u)(t, x, y) = \int_{\R^d} u\big(t,(x,y) +\gamma_1(t, x, y, \xi)\big) \nu_1(\ud \xi).
\end{equation}
For all $\eps > 0$ sufficiently small we have $(t, x \pm \eps, y) \in \cS \cap \cU$. This allows us to bound the left and right derivatives in $x$. Indeed, for the right derivative we have
\begin{align*}
&\frac{(\cA u)(t, x+\eps, y) - (\cA u)(t, x, y)}{\eps} \\
&=
\int_{\R^d} \frac{1}{\eps}\Big( u\big(t,(x+\eps,y) +\gamma_1(t, x+\eps, y, \xi)\big) - u\big(t,(x,y) +\gamma_1(t, x, y, \xi)\big) \Big)\nu_1(\ud \xi)\\
&=
\int_{\R^d} \frac{1}{\eps}\int_0^1  \partial_x u \Big(t,(x,y) +\gamma_1(t, x, y, \xi) + \theta (\zeta (\xi, \eps), 0)\Big)\, \zeta(\xi, \eps)\ \ud \theta\, \nu_1(\ud \xi),
\end{align*}
where 
\begin{equation}\label{eqn:zeta}
\zeta (\xi, \eps)= \eps + \gamma_1^{(1)}(t, x+\eps, y, \xi) -  \gamma_1^{(1)}(t, x, y, \xi) 
\end{equation}
and in the second equality we used that $\gamma_1^{(2)}$ does not depend on $x$. Due to the continuity of $\gamma_1$ and of $\partial_x u$, we obtain
\[
\lim_{\eps \downarrow 0} \partial_x u \big(t,(x,y) +\gamma_1(t, x, y, \xi) + \theta (\zeta (\xi, \eps), 0)\big)
=
\partial_x u \big(t,(x,y) +\gamma_1(t, x, y, \xi)\big).
\]
By (b),
\[
\lim_{\eps \downarrow 0} \frac{\zeta(\xi,\eps)}{\eps} = 1 + \partial_x \gamma_1^{(1)}(t, x, y, \xi).
\]
By the dominated convergence theorem, enabled by Assumption \ref{ass:jumps} (A.ii) or (A.iii), we have
\begin{multline}\label{eqn:cA_right_der}
\lim_{\eps \downarrow 0} \frac{(\cA u)(t, x+\eps, y) - (\cA u)(t, x, y)}{\eps}\\
=
\int_{\R^d} \partial_x u \Big(t,(x,y) +\gamma_1(t, x, y, \xi)\Big)\, \big(1 + \partial_x \gamma_1^{(1)}(t, x, y, \xi)\big) \, \nu_1(\ud \xi).
\end{multline}
For the left derivative we apply the same technique and obtain
\begin{align*}
&\frac{(\cA u)(t, x-\eps, y) - (\cA u)(t, x, y)}{-\eps} \\
&=
- \int_{\R^d} \frac{1}{\eps}\Big( u\big(t,(x-\eps,y) +\gamma_1(t, x-\eps, y, \xi)\big) - u\big(t,(x,y) +\gamma_1(t, x, y, \xi)\big) \Big)\nu_1(\ud \xi)\\
&=
- \int_{\R^d} \frac{1}{\eps}\int_0^1 \Big( \partial_x u \big(t,(x,y) +\gamma_1(t, x, y, \xi) + \theta (\zeta (\xi, -\eps), 0)\big) \Big) \zeta(\xi, -\eps)\ \ud \theta\, \nu_1(\ud \xi),
\end{align*}
where $\zeta(\cdot)$ is defined in \eqref{eqn:zeta}. Taking $\eps \downarrow 0$, by the dominated convergence theorem, we obtain the same limit as in \eqref{eqn:cA_right_der}. This proves that $(\cA u)(t,x,y)$ is differentiable in $x$ and the derivative is given by the expression \eqref{eqn:cA_der}. It remains to notice that by (d), the partial derivative $\partial_x u$ is non-negative and thanks to (b) we have $1 + \partial_x \gamma_1^{(1)}(t, x, y, \xi) \ge 0$. This implies that $\partial_x \cA(t, x, y) \ge 0$.

To complete the proof, we note that
\[
\partial_x (\beta_2^{-1} \cA u) (t,x,y) = \frac{\partial_x (\cA u) \beta_2 - (\cA u) \partial_x \beta_2}{\beta_2^2}(t,x,y)
\]
and use that $\beta_2 > 0$, $\partial_x \beta_2 \le 0$ (by (a)), $\partial_x \cA u \ge 0$ (from \eqref{eqn:cA_der}) and $(\cA u) \ge 0$ since $u \ge 0$ and $\cA u$ is given by \eqref{eqn:cA_on_S}.
\end{proof}

If $\gamma_1 = \gamma_1(t,y,\xi)$, i.e., jump sizes do not depend on $x$, the statement of Lemma \ref{lem:cA_der} simplifies as follows.
\begin{corollary}
Let Assumption \ref{ass:all} and Assumption \ref{ass:jumps} with exclusion of {\em(A.i)} hold. Let $u=v-g\in D_\cA$ and
assume that
\begin{itemize}
 \item[(a)] $\partial_x \beta_2\le 0$,
 \item[(b)] $\gamma_1$ does not depend on $x$,
 \item[(c)] $\partial_x u$ is non-negative on $[0,T)\times\cO$.
\end{itemize}
Then for $(t,x,y)\in \cU$ such that $x < x^*(t,y)$ we have
\begin{equation*}
\partial_x (\cA u)(t,x,y) = \int_{\R^d} \partial_x u \big(t,(x,y) +\gamma_1(t, y, \xi)\big) \, \nu_1(\ud \xi) \ge 0
\end{equation*}
and 
$\partial_x \big(\beta_2^{-1} \cA u\big) (t, x, y) \ge 0$.
\end{corollary}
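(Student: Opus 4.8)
The plan is to obtain this as an immediate specialisation of Lemma \ref{lem:cA_der}. First I would record that the ambient hypotheses match exactly: the corollary assumes Assumption \ref{ass:all} together with Assumption \ref{ass:jumps} (excluding (A.i)) and $u = v - g \in D_\cA$, which are precisely the standing hypotheses of Lemma \ref{lem:cA_der}. Among the three numbered conditions, the corollary's (a) is verbatim (a) of the lemma, and the corollary's (c) (that $\partial_x u$ is non-negative) is verbatim (d) of the lemma. Hence the only thing to check is that the corollary's condition (b) — that $\gamma_1$ is independent of $x$ — delivers both (b) and (c) of Lemma \ref{lem:cA_der}.

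This verification is routine. If $\gamma_1(t,x,y,\xi) = \gamma_1(t,y,\xi)$ does not depend on $x$, then each coordinate $\gamma_1^{(1)}$ and $\gamma_1^{(2)}$ is trivially continuously differentiable in $x$, being constant in that variable, with $\partial_x \gamma_1^{(1)} \equiv 0$ and $\partial_x \gamma_1^{(2)} \equiv 0$. In particular $\partial_x \gamma_1^{(1)} = 0 \ge -1$, which is condition (b) of the lemma, and $\partial_x \gamma_1^{(2)} = 0$, which is condition (c). Thus every hypothesis of Lemma \ref{lem:cA_der} holds under the corollary's assumptions.

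It then remains to invoke Lemma \ref{lem:cA_der} and simplify. For $(t,x,y) \in \cU$ with $x < x^*(t,y)$, the lemma yields the representation \eqref{eqn:cA_der}. Substituting $\partial_x \gamma_1^{(1)} = 0$ collapses the weight $\bigl(1 + \partial_x \gamma_1^{(1)}(t,x,y,\xi)\bigr)$ to $1$ and replaces $\gamma_1(t,x,y,\xi)$ by $\gamma_1(t,y,\xi)$, so the formula reduces exactly to $\int_{\R^d} \partial_x u\bigl(t,(x,y)+\gamma_1(t,y,\xi)\bigr)\,\nu_1(\ud\xi)$. Non-negativity of this integral is immediate from condition (c) ($\partial_x u \ge 0$) and the non-negativity of the measure $\nu_1$, and the final inequality $\partial_x\bigl(\beta_2^{-1}\cA u\bigr)(t,x,y) \ge 0$ is inherited directly from the concluding statement of Lemma \ref{lem:cA_der}.

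Since the corollary is a direct specialisation, there is no substantive obstacle. The only point demanding (minimal) care is confirming that a map $\gamma_1$ that is constant in $x$ still meets the continuous-differentiability requirement in (b) of the lemma — which it does vacuously — and that this is what permits the clean vanishing of the extra factor in \eqref{eqn:cA_der}.
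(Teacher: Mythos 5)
Your proof is correct and is precisely the argument the paper intends: the corollary is stated as an immediate specialisation of Lemma \ref{lem:cA_der}, obtained by observing that $\gamma_1$ independent of $x$ gives $\partial_x\gamma_1^{(1)}=0\ge -1$ and $\partial_x\gamma_1^{(2)}=0$, so that conditions (b) and (c) of the lemma hold and the weight $1+\partial_x\gamma_1^{(1)}$ in \eqref{eqn:cA_der} collapses to $1$. Your verification that all remaining hypotheses match verbatim, and that the two conclusions then follow directly from the lemma, is complete and matches the paper's (implicit) reasoning.
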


\begin{remark}
It is clear from the proof of Lemma \ref{lem:cA_der} that condition (d) can be relaxed when more is known about the direction and size of jumps. Indeed, it is only necessary that $\partial_x u \ge 0$ at points $(t,x,y)$ to which jumps can occur from the stopping set. Hence, condition (d) can be replaced by requiring that $\partial_x u \ge 0$ on the set
\[
\bigcup_{(t,x,y) \in \cS \cap \cU} \big\{ \big(t,(x,y) +\gamma_1(t, x, y, \xi)\big) :\ \xi \in \supp (\nu_1)\big\}.
\] 
\end{remark}

\section{Concluding remarks}\label{sec:last}

It is clear that our framework includes also simpler cases in which the optimal boundary is a function of a single variable. In those cases $r$, $v$ and $g$ are functions of two variables, rather than three: $r(t,x)$, $v(t,x)$ and $g(t,x)$, or $r(x,y)$, $v(x,y)$ and $g(x,y)$. Hence they fit within our framework by fictitiously adding a third variable, with respect to which $r$, $v$, $g$ and the optimal boundary are constant. \jp{We can therefore obtain the} continuity of boundaries of the form $y\mapsto x_*(y)$ and $t\mapsto x_*(t)$ as in \cite{P19} and \cite{DeA15,LM}, respectively, \jp{if the fictitious problem satisfies our assumptions. It should be noted that our assumptions correspond to different conditions for the two-dimensional problem than those required in the aforementioned papers.}

A close inspection of our arguments of proof suggests that in order to obtain continuity in dimension higher than what we can cover with Theorems \ref{them:conti-1} and \ref{them:conti-2} one would need to require continuity of \jp{some} second order derivatives of the value function up to the boundary. \jp{Indeed, if we add one more state variable, say $Z$, with diffusive dynamics, then on the right-hand side of \eqref{eqn:u_yy} we need to deal with terms of the form $\partial_{zz} u$ and $\partial_{yz} u$ that cannot be removed using integration by parts in $x$ as we do in \eqref{eq:uyyb}. Continuity up to the boundary of second order derivatives} is not known to hold in general for time-inhomogeneous optimal stopping problems with multi-dimensional diffusions and therefore we prefer to set the question aside for further research.

In some applications of optimal stopping theory it is necessary to consider more general additive terms in the underlying dynamics. In particular, setting $\cA\equiv0$ for simplicity, we may need to consider SDEs of the form
\begin{align*}
& \ud X_t=\alpha_1(t,X_t,Y_t)\ud t+\sqrt{2\beta_1(t,X_t,Y_t)}\ud B_t+\ud A_t, \quad X_0=x,\\
& \ud Y_t=\alpha_2(t,X_t,Y_t)\ud t+\sqrt{2\beta_2(t,X_t,Y_t)}\ud W_t+\ud C_t, \quad Y_0=y,
\end{align*}
where the processes $(A_t)$ and $(C_t)$ are of bounded variation (continuous) and take the form of additive functionals of the triple $(t,X,Y)$ (e.g., local times and/or running maximum/minimum of the process $X$ or $Y$). Likewise, we may add a running cost/profit and a more general discount factor as in
\begin{align*}
v(t,x,y)=\sup_{t\le \tau\le T}\E_{t,x,y}\left[\int_t^{\tau}\e^{-\Lambda_s}h(s,X_s,Y_s)\ud(s+ G_s)+\e^{-\Lambda_\tau}g(\tau,X_\tau, Y_\tau)\right],
\end{align*}
where $\Lambda_s:=\int_t^s r(u,X_u,Y_u)\ud u +H_s$ and the processes $(G_t)$ and $(H_t)$ are again of bounded variation (continuous) and in the form of additive functionals of $(t,X,Y)$. This is the case for example in the Russian option problem (see, e.g., \cite{DKvS, E04, P05, SS93}), where $Y$ is a Brownian motion, $X$ its running maximum, the interest rate is constant, $h\equiv 0$, $g=1$ and $H_s=X_s$. More examples appear in recent developments of the optimal dividend problem (see, e.g., \cite{DeAE17}), where $(X,Y)$ is typically a reflecting diffusion (so that $(A,C)$ are in the form of local times), $h\equiv 0$, $g=1$, the discount rate is constant and $(H_s)$ is also in the form of a local time of the process $(X,Y)$. These situations can be addressed with our Theorem \ref{them:conti-1} provided that the additional bounded variation processes $(A,C,G,H)$ are not supported on the domain $\cU$ used to define Assumption \ref{ass:all}. That is we need $\ud A_t=\ud C_t=\ud G_t=\ud H_t=0$ a.s.\ on $\cU$ (notice that indeed the definition of the infinitesimal generator $\cL$ is only needed locally on $\cU$). \jp{Examples studied in} the papers mentioned above fall under this class of processes and our results \jp{can be applied provided that Assumption \ref{ass:all} also holds}.

\medskip


\begin{thebibliography}{99}

\bibitem{Applebaum}
\textsc{Applebaum, D.,} 2009. {\em L\'evy processes and stochastic calculus.} 2nd edition, Cambridge University Press

\bibitem{BX09}
\textsc{Bayraktar, E., Xing, H.}, 2009. Analysis of the optimal exercise boundary of American options for jump diffusions. {\em SIAM J.\ Math.\ Anal.}, {\bf 41} (2), pp.\ 825-860.

\bibitem{CDeAP21}
\textsc{Cai, C., De Angelis, T., Palczewski, J.,} 2021. The American put with finite maturity and stochastic interest rate.  {\bf arXiv}:2104:08502.

\bibitem{Cannon}
\textsc{Cannon, J.R.}, 1984. {\em The one-dimensional heat equation}. Encyclopedia of Mathematics and its Applications, {\bf 23}. Addison-Wesley, Reading, MA.

\bibitem{CC07}
\textsc{Chen, X., Chadam, J.}, 2007. A mathematical analysis of the optimal exercise boundary for American put options. {\em SIAM J.\ Math.\ Anal.}, {\bf 38} (5), pp.\ 1613-1641.

\bibitem{CCMS}
\textsc{Christensen, S., Crocce, F., Mordecki, E., Salminen, P.}, 2019. On optimal stopping of multidimensional diffusions. {\em Stoch.\ Process.\ Appl.}, {\bf 129} (7), pp.\ 2561-2581.


\bibitem{DeA15}
\textsc{De Angelis, T.}, 2015. A note on the continuity of free-boundaries in finite-horizon optimal stopping problems for one-dimensional diffusions. {\em SIAM J.\ Control Optim.}, {\bf 53} (1), pp.\ 167-184.

\bibitem{DeAE17}
\textsc{De Angelis, T. and Ekstr\"om, E.,} 2017. The dividend problem with a finite horizon. {\em Ann.\ Appl.\ Probab.}, {\bf 27} (6), pp.\ 3525-3546.

\bibitem{DeAFF17}
\textsc{De Angelis, T., Federico, S. and Ferrari, G.}, 2017. Optimal boundary surface for irreversible investment with stochastic costs. {\em Math.\ Oper.\ Res.}, {\bf 42} (4), pp.\ 1135-1161.

\bibitem{DeAPe20}
\textsc{De Angelis, T., Peskir, G.,} 2020. Global $C^{1}$ regularity of the value function in optimal stopping problems. {\em Ann.\ Appl.\ Probab.}, {\bf 30}(3), pp.\ 1007-1031.

\bibitem{DeAS19}
\textsc{De Angelis, T., Stabile, G.}, 2019.
On Lipschitz continuous optimal stopping boundaries.
{\em SIAM J.\ Control Optim.}, {\bf 57} (1), pp.\ 402-436.

\bibitem{DK20}
\textsc{Detemple, J., Kitapbayev, Y.}, 2020. The value of green energy: optimal investing in mutually exclusive projects and operating leverage. {\em Rev.\ Financial\ Stud.}, {\bf 33}, pp.\ 3307-3347.


\bibitem{DT02}
\textsc{Detemple, J., Tian, W.}, 2002. The valuation of American options for a class of diffusion processes. {\em Management Science}, {\bf 48}(7), pp.\ 917-937.

\bibitem{DKvS}
\textsc{Duistermaat, H., Kyprianou, A., van Schaik, K.}, 2005. Finite expiry Russian options. {\em Stoch.\ Process.\ Appl.}, {\bf 115} (4), pp.\ 609-638.

\bibitem{E04}
\textsc{Ekstr\"om, E.}, 2004. Russian options with a finite-time horizon. {\em J.\ Appl.\ Probab.}, {\bf 41} (2), pp.\ 313-326.

\bibitem{EW21}
\textsc{Ekstr\"om, E., Wang, Y.}, 2021. Multi-dimensional sequential testing and detection. {\em To appear in Stochastics}.

\bibitem{EK}
\textsc{El Karoui, N.}, 1981. {\em Les aspects probabilistes du contr\^ole stochastique}. Lecture notes, Ecole d'Et\'e de Probabilit\'es de Saint-Flour IX, 1979.

\bibitem{EPZ20}
\textsc{Ernst, P.A., Peskir, G., Zhou, Q.}, 2020. Optimal real-time detection of a drifting Brownian coordinate. {\em Ann.\ Appl.\ Probab.}, {\bf 30}, pp.\ 1032-1065.

\bibitem{Fri}
\textsc{Friedman, A.}, 1964. {\em Partial differential equations of parabolic type}. Prentice-Hall Inc., Englewood Cliffs, N.J.

\bibitem{Fri75}
\textsc{Friedman, A.}, 1975. Parabolic variational inequalities in one space dimension and smoothness of the free boundary. {\em J.\ Functional Analysis}, {\bf 18}, pp.\ 151-176. 

\bibitem{Garroni}
\textsc{Garroni, M.G. and Menaldi, J.L.}, 2002. {\em Second-order elliptic  integro-differential problems}. Chapman \& Hall/CRC

\bibitem{J91}
\textsc{Jacka, S.D.}, 1991. Optimal Stopping and the American put. {\em Math.\ Finance}, {\bf 1}, pp.\ 1-14.

\bibitem{JL92}
\textsc{Jacka, S.D., Lynn, J.R.}, 1992. Finite-horizon optimal stopping, obstacle problems and the shape of the continuation region. {\em Stochastics}, {\bf 39} (1), pp.\ 25-42.

\bibitem{JLL}
\textsc{Jaillet, P., Lamberton, D., Lapeyre, B.}, 1990. Variational inequalities and the pricing of American options. {\em Acta Appl.\ Math.}, {\bf 21}(3), pp.\ 263-289.

\bibitem{JP}
\textsc{Johnson, P. and Peskir, G.}, 2017. Quickest detection problems for Bessel processes. {\em Ann.\ Appl.\ Probab.}, {\bf 27} (2), pp.\ 1003-1056.

\bibitem{KS2}
\textsc{Karatzas, I., Shreve, S.E.}, 1998. {\em Methods of mathematical finance}. New York: Springer.

\bibitem{K90}
\textsc{Kim, I.J.}, 1990. The analytic valuation of American options. {\em Rev.\ Financial Stud.}, {\bf 3}, pp.\ 547-572.

\bibitem{K56}
\textsc{Kolodner, I.I.,} 1956. Free boundary problem for the heat equation with applications to
problems of change of phase. I. General method of solution. {\em Comm.\ Pure Appl.\ Math.}, {\bf 9}, pp.\ 1-31.

\bibitem{KD69}
\textsc{Kruse, R.L.\ and Deely, J.J.}, 1969. Joint continuity of monotonic functions. {\em Amer.\ Math.\ Monthly}, {\bf 76} (1), pp.\ 74-76.


\bibitem{L09}
\textsc{Lamberton, D.}, 2009. Optimal stopping with irregular reward functions. {\em Stoch.\ Process.\ Appl.}, {\bf 119} (10), pp.\ 3253-3284. 

\bibitem{LM}
\textsc{Lamberton, D., Mikou, M.}, 2008. The critical price for the American put in an exponential L\'{e}vy model. {\em Finance Stoch.}, {\bf 12} (4), pp.\ 561-581.

\bibitem{LT19}
\textsc{Lamberton, D., Terenzi, G.}, 2019. Variational formulation of American option prices in the Heston model. {\em SIAM J.\ Financial Math.}, {\bf 10} (1), pp.\ 261-308.

\bibitem{LS09}
\textsc{Laurence, P., Salsa, S.}, 2009.
Regularity of the free boundary of an American option on several assets. 
{\em Comm.\ Pure Appl.\ Math.}, {\bf 62} (7), pp.\ 969-994.

\bibitem{MK65}
\textsc{McKean, H.P., Jr,} 1965. Appendix: A free boundary problem for the heat equation arising from a Problem of mathematical economics. {\em Ind.\ Mgmt.\ Rev.}, {\bf 6}, pp.\ 32-39.

\bibitem{M92}
\textsc{Myneni, R.}, 1992. The pricing of the American option. {\em Ann.\ Appl.\ Probab.}, {\bf 2}, pp.\ 1-23.

\bibitem{Oksendal}
\textsc{\O{}ksendal, B. and Sulem, A.}, 2005. {\em Applied Stochastic Control of Jump Diffusions}. Springer-Verlag Berlin Heidelberg

\bibitem{PS11}
\textsc{Palczewski, J., Stettner, {\L}.}, 2011. Stopping of functionals with discontinuity at the boundary of an open set. {\em Stoch.\ Process.\ Appl.}, {\bf 121}, pp.~2361-2392.

\bibitem{P05am}
\textsc{Peskir, G.}, 2005. On the American option problem. {\em Math.\ Finance}, {\bf 15}, pp.\ 169-181.

\bibitem{P05}
\textsc{Peskir, G.}, 2005. The Russian option: finite horizon. {\em Finance Stoch.}, {\bf 9}(2), pp.\ 251-267. 

\bibitem{PS}
\textsc{Peskir, G. and Shiryaev, A.}, 2006. {\em Optimal stopping and free-boundary problems}. Birkh\"auser Basel.

\bibitem{P19}
\textsc{Peskir, G.}, 2019. Continuity of the optimal stopping boundary for two-dimensional diffusions. {\em Ann.\ Appl.\ Probab.}, {\bf 29}, pp.\ 505-530.

\bibitem{Sato}
\textsc{Sato, K.}, 1999. {\em L\'evy processes and infinitely divisible distributions.} Cambridge University Press.

\bibitem{SS93}
\textsc{Shepp, L.A., Shiryaev, A.N.}, 1993. The Russian option: reduced regret. {\em Ann.\ Appl.\ Probab.}, {\bf 3}, pp.\ 631-640.

\bibitem{S}
\textsc{Shiryaev, A.N.}, 1978. {\em Optimal stopping rules}. Springer-Verlag, Berlin, Heidelberg.

\bibitem{VM76}
\textsc{Van Moerbeke, P.}, 1976. On optimal stopping and free boundary problems. {\em Arch.\ Rational
Mech.\ Anal.}, {\bf 60}, pp.\ 101-148.

\bibitem{YJB06}
\textsc{Yang, C., Jiang, L., Bian, B.}, 2006. Free boundary and American options in a jump-diffusion model. {\em European J.\ Appl.\ Math.}, {\bf 17} (1), pp.\ 95-127.

\end{thebibliography}
\end{document}